\DeclareMathOperator{\id}{id}
\newcommand{\cat}[1]{\mathbf{#1}}
\newcommand{\rgb}{\cat{RGB}}
\newcommand{\rg}{\cat{RG}}
\newcommand{\rgp}{\rg^+}
\newcommand{\fdhilb}{\cat{FdHilb}}
\newcommand{\stab}{\cat{Stab}}
\newcommand{\spek}{\cat{Spek}}
\newcommand{\dcirc}{\cat{D_\bigcirc}}
\newcommand{\dsmc}{SM$\dag$-category}
\newcommand{\sembrack}[1]{\left\llbracket #1 \right\rrbracket}
\newcommand{\tostab}[2]{
\mathchoice
{
  \sembrack{#2}_{\makebox[0pt][l]{${}_{#1}$}}
}
{
  \sembrack{#2}_{#1}
}{\sembrack{#2}_{#1}}{\sembrack{#2}_{#1}}
}
\newcommand{\rgtostab}[1]{\tostab{\rg}{#1}}
\newcommand{\rgbtostab}[1]{\tostab{\rgb}{#1}}
\newcommand{\rgptostab}[1]{\tostab{\rgp}{#1}}
\newcommand{\rgtorgb}{\mathcal{T}}
\newcommand{\rgtorgbof}[1]{\rgtorgb\left(#1\right)}
\newcommand{\rgbtorgp}{\mathcal{S}}
\newcommand{\rgbtorgpof}[1]{\rgbtorgp\left(#1\right)}
\newcommand{\dagof}[1]{\left(#1\right)^{\dag}}
\theoremstyle{plain}
\newtheorem{thm}{Theorem}[section]
\newtheorem{lem}[thm]{Lemma}
\newtheorem{prop}[thm]{Proposition}
\theoremstyle{definition}
\newtheorem{defn}{Definition}[section]
\theoremstyle{remark}
\newtheorem*{note}{Note}
\newcommand{\po}{\ar@{}[dr]|{\text{\pigpenfont R}}}
\newcommand{\pb}{\ar@{}[dr]|{\text{\pigpenfont J}}}
\tikzset{->, baseline=-2.5pt, >=latex}
\tikzstyle{every picture}=[scale=0.75,inner sep=1pt, minimum size=11pt]
\tikzstyle{textstyle}=[scale=0.6,inner sep=1pt]
\newcommand{\textscale}{0.7}
\tikzstyle{none}=[inner sep=0pt,minimum size=0pt]
\tikzstyle{rn}=[ellipse,fill=White,draw=Red,line width=1 pt]
\tikzstyle{gn}=[ellipse,fill=White, draw=Lime, line width=1 pt]
\tikzstyle{bn}=[ellipse,fill=White, draw=Blue, line width=1 pt]
\tikzstyle{ro}=[circle,fill=Red!50, draw=black,line width=0 pt]
\tikzstyle{go}=[circle,fill=Lime!50, draw=black, line width=0 pt]
\tikzstyle{wn}=[circle,fill=White, draw=Black, line width=0 pt]
\tikzstyle{cr}=[decoration={markings, mark=at position \markat
\tikzstyle{cl}=[decoration={markings, mark=at position \markat
\tikzstyle{ys}=[fill=Yellow, draw=Black, line width=0 pt]
\tikzstyle{hada}=[decoration={markings, mark=at position \markat
\newcommand{\markat}{0.5}
\newcommand{\markwithsym}{|}
\newcommand{\cyanmark}{{\arrow[cyan, line width=2pt]{\markwithsym}}}
\newcommand{\magentamark}{{\arrow[magenta, line width=2pt]{\markwithsym}}}
\newcommand{\yellowmark}{{\arrow[yellow, line width=2pt]{\markwithsym}}}
\newcommand{\testing}{{\draw[yellow,-,line width=2pt] (0pt, 5pt) -- (0pt, -5pt);}}
\tikzstyle{ctick}=[decoration={markings, mark=at position \markat
\tikzstyle{mtick}=[decoration={markings, mark=at position \markat
\tikzstyle{ytick}=[decoration={markings, mark=at position \markat
\tikzstyle{ttick}=[decoration={markings, mark=at position \markat
\newcommand{\identity}[1][]{
\mathchoice{\begin{tikzpicture}
	\begin{pgfonlayer}{nodelayer}
		\node [style=none] (0) at (0, 1) {};
		\node [style=none] (1) at (0, -1) {};
	\end{pgfonlayer}
	\begin{pgfonlayer}{edgelayer}
		\draw [#1] (0.center) to (1.center);
	\end{pgfonlayer}
\end{tikzpicture}}
{
\begin{tikzpicture}[scale=\textscale]
	\begin{pgfonlayer}{nodelayer}
		\node [style=none] (0) at (0, 1) {};
		\node [style=none] (1) at (0, -1) {};
	\end{pgfonlayer}
	\begin{pgfonlayer}{edgelayer}
		\draw [#1] (0.center) to (1.center);
	\end{pgfonlayer}
\end{tikzpicture}
}
{}{}
}
\newcommand{\hd}{
\mathchoice{\begin{tikzpicture}
	\begin{pgfonlayer}{nodelayer}
		\node [style=none] (0) at (0, 1) {};
		\node [style=none] (1) at (0, -1) {};
	\end{pgfonlayer}
	\begin{pgfonlayer}{edgelayer}
		\draw [hada] (0.center) to (1.center);
	\end{pgfonlayer}
\end{tikzpicture}}
{
\begin{tikzpicture}[scale=\textscale]
	\begin{pgfonlayer}{nodelayer}
		\node [style=none] (0) at (0, 1) {};
		\node [style=none] (1) at (0, -1) {};
	\end{pgfonlayer}
	\begin{pgfonlayer}{edgelayer}
		\draw [hada] (0.center) to (1.center);
	\end{pgfonlayer}
\end{tikzpicture}
}
{}{}
}
\newcommand{\rot}[2][$\theta$]{
  \mathchoice {
    \begin{tikzpicture}
      \begin{pgfonlayer}{nodelayer}
		\node [style=none] (0) at (0, 1) {};
		\node [style=#2] (1) at (0, -0) {#1};
		\node [style=none] (2) at (0, -1) {};
      \end{pgfonlayer}
      \begin{pgfonlayer}{edgelayer}
		\draw (0.center) to (1);
		\draw (1) to (2.center);
      \end{pgfonlayer}
    \end{tikzpicture}
  }
  {
    \begin{tikzpicture}[scale=\textscale]
      \begin{pgfonlayer}{nodelayer}
		\node [style=none] (0) at (0, 1) {};
		\node [style=#2] (1) at (0, -0) {#1};
		\node [style=none] (2) at (0, -1) {};
      \end{pgfonlayer}
      \begin{pgfonlayer}{edgelayer}
		\draw (0.center) to (1);
		\draw (1) to (2.center);
      \end{pgfonlayer}
    \end{tikzpicture}
  }
  {}{}
}
\newcommand\unit[2][]{
  \mathchoice
  {\begin{tikzpicture}
      \begin{pgfonlayer}{nodelayer}
		\node [style=#2] (0) at (0, 0.4) {#1};
		\node [style=none] (1) at (0, -0.6) {};
      \end{pgfonlayer}
      \begin{pgfonlayer}{edgelayer}
		\draw (0) to (1.center);
      \end{pgfonlayer}
    \end{tikzpicture}
  }
  {\begin{tikzpicture}[scale=\textscale]
	\begin{pgfonlayer}{nodelayer}
		\node [style=#2] (0) at (0, 0.4) {#1};
		\node [style=none] (1) at (0, -0.6) {};
	\end{pgfonlayer}
	\begin{pgfonlayer}{edgelayer}
		\draw (0) to (1.center);
	\end{pgfonlayer}
\end{tikzpicture}
  }
  {}
  {}
}
\newcommand\mul[2][]{
  \mathchoice
  {\begin{tikzpicture}
      \begin{pgfonlayer}{nodelayer}
		\node [style=none] (0) at (-0.8, 0.8) {};
		\node [style=none] (1) at (0.8, 0.8) {};
		\node [style=#2] (2) at (0, 0.2) {#1};
		\node [style=none] (3) at (0, -0.8) {};
      \end{pgfonlayer}
      \begin{pgfonlayer}{edgelayer}
		\draw (1.center) to (2);
		\draw (0.center) to (2);
		\draw (2) to (3.center);
      \end{pgfonlayer}
    \end{tikzpicture}
  }
  {
    \begin{tikzpicture}[scale=\textscale]
      \begin{pgfonlayer}{nodelayer}
        \node [style=none] (0) at (-0.8, 0.8) {};
        \node [style=none] (1) at (0.8, 0.8) {};
        \node [style=#2] (2) at (0, 0.2) {#1};
        \node [style=none] (3) at (0, -0.8) {};
      \end{pgfonlayer}
      \begin{pgfonlayer}{edgelayer}
        \draw (1.center) to (2);
        \draw (0.center) to (2);
        \draw (2) to (3.center);
      \end{pgfonlayer}
    \end{tikzpicture}
  }
  {}{}
}
\newcommand\counit[2][]{
  \mathchoice
  {
    \begin{tikzpicture}
      \begin{pgfonlayer}{nodelayer}
		\node [style=none] (0) at (0, 0.6) {};
		\node [style=#2] (1) at (0, -0.4) {#1};
      \end{pgfonlayer}
      \begin{pgfonlayer}{edgelayer}
		\draw (0.center) to (1);
      \end{pgfonlayer}
    \end{tikzpicture}
  }
  {\begin{tikzpicture}[scale=\textscale]
      \begin{pgfonlayer}{nodelayer}
		\node [style=none] (0) at (0, 0.6) {};
		\node [style=#2] (1) at (0, -0.4) {#1};
      \end{pgfonlayer}
      \begin{pgfonlayer}{edgelayer}
		\draw (0.center) to (1);
      \end{pgfonlayer}
    \end{tikzpicture}
  }
  {}{}
}
\newcommand\comul[2][]{
  \mathchoice
  {\begin{tikzpicture}
	\begin{pgfonlayer}{nodelayer}
		\node [style=none] (0) at (0, 0.8) {};
		\node [style=#2] (1) at (0, -0.2) {#1};
		\node [style=none] (2) at (-0.8, -0.8) {};
		\node [style=none] (3) at (0.8, -0.8) {};
	\end{pgfonlayer}
	\begin{pgfonlayer}{edgelayer}
		\draw (1) to (3.center);
		\draw (0.center) to (1);
		\draw (1) to (2.center);
	\end{pgfonlayer}
\end{tikzpicture}
}
  {\begin{tikzpicture}[scale=\textscale]
	\begin{pgfonlayer}{nodelayer}
		\node [style=none] (0) at (0, 0.8) {};
		\node [style=#2] (1) at (0, -0.2) {#1};
		\node [style=none] (2) at (-0.8, -0.8) {};
		\node [style=none] (3) at (0.8, -0.8) {};
	\end{pgfonlayer}
	\begin{pgfonlayer}{edgelayer}
		\draw (1) to (3.center);
		\draw (0.center) to (1);
		\draw (1) to (2.center);
	\end{pgfonlayer}
\end{tikzpicture}
}
  {}{}
}
\newcommand{\generators}[2]{
  \unit{#1}#2
  \mul{#1}#2
  \rot{#1}#2
  \comul{#1}#2
  \counit{#1}
}
\newcommand{\qqq}{\qquad\qquad}
\author{Alex Lang \qquad\qquad Bob Coecke
\institute{Department of Computer Science
\\ University of Oxford
\\ Oxford, UK}
\email{alexandre.lang@polytechnique.edu \qquad\qquad coecke@cs.ox.ac.uk}
}
\title{Trichromatic Open Digraphs for Understanding Qubits}
\begin{document}

\maketitle
\begin{abstract}
We introduce a trichromatic graphical calculus for quantum computing. The generators represent three complementary observables that are treated on equal footing, hence reflecting the symmetries of the Bloch sphere.  We derive the Euler angle decomposition of the Hadamard gate within it ~\cite{duncan2009graph} as well as the so-called supplementary relationships \cite{coecke2011three}, which are valid equations for qubits that were not derivable within $Z$/$X$-calculus  of Coecke and Duncan \cite{coecke2011interacting,coecke2008interacting}. More specifically, we have: dichromatic $Z$/$X$-calculus + Euler angle decomposition of the Hadamard gate = trichromatic calculus.
\end{abstract}


\section{Introduction}

We build on the stream of work of categorical semantics for quantum 
information processing, first initiated in ~\cite{abramsky2004categorical}. In this tradition, Coecke and Duncan 
developed and made extensive use of a calculus of dichromatic open digraphs to 
express quantum protocols and quantum 
states~\cite{coecke2008interacting,coecke2011interacting}. 
This graphical calculus turned out to be universal for quantum computing. 
The graphical calculus has been used to prove many statements useful to 
quantum computing, including some about measurement-based quantum computation (MBQC)~\cite{coecke2008interacting, duncan2009graph, duncan2010rewriting}, 
topological MBQC ~\cite{1367-2630-13-9-095011} and a 
multitude of other algorithms and protocols~\cite{coecke2011interacting,coecke2010environment,hillebrand2011quantum}. 

\begin{note}
  The term open digraphs is inspired by the work in ~\cite{DBLP:journals/corr/abs-1011-4114}, and 
  was chosen over the term diagram in order to avoid confusion with commutative 
  diagrams which are omnipresent in category theory, and will also be used in 
  this paper.
\end{note}

\begin{note}
  We will sometimes refer to the dichromatic calculus or the red-green 
  calculus, by which we mean the graphical calculus loosely defined in 
  ~\cite{coecke2008interacting,coecke2011interacting,
    duncan2009graph,duncan2010rewriting}
  ---sometimes called the $Z$/$X$-calculus---and which will be more precisely 
  defined as a category below.
\end{note}

The dichromatic calculus talks of two complementary observables in 
qubits. It is well known that it is possible to fit 3 complementary 
observables in qubits, and no more~\cite{DBLP:journals/qic/WocjanB05}. We thus set out to 
find a calculus which speaks of three complementary observables in a nice 
way---with the hope of developing a more ``complete'' theory---and contrast it 
with the existing dichromatic calculus, which is known to be incomplete with 
respect to stabilizer quantum mechanics. This incompleteness can be witnessed first-hand in
~\cite{coecke2011three,hillebrand2011quantum} where ad hoc rules were added to the dichromatic calculus.

The Bloch sphere is a well-understood~\cite{nielsen2002quantum} way to 
understand single-qubit unitaries as rotations of a 2-sphere---the elements of 
the group $SO(3)$. Although the $Z$/$X$-calculus is universal, and thus able
to express all these unitaries, it does not express some of the crucial
equations between them.

The symmetries of the Bloch sphere place each of the three complementary 
observables we wish to speak of on the same footing.  it would be desirable 
that this fact be reflected in the trichromatic calculus.  However, it can 
easily be shown that the Z/X-calculus does not allow for an extension which would make this true.  
More specifically, the generators of the $Z$/$X$-calculus have been picked 
such that the induced compact structures coincide, and one can easily show 
that this is only possible for a pair of complementary observables, and not a 
triple~\cite{coecke2008bases}.

We developed the trichromatic calculus, which expresses the presence of 3 
complementary observables in qubits, and also how they relate to each other 
through rotations of the Bloch sphere. We also showed that this was equivalent 
to adding an Euler angle decomposition of the Hadamard gate to the dichromatic 
calculus. This is in turn equivalent to Van den Nest's 
theorem in the dichromatic setting~\cite{van2004graphical,duncan2009graph}.
Thus, it subsumes all the improvements that are currently known to the 
dichromatic calculus, and does so in an elegant way. 
It is not yet known if these calculi are complete or not with 
respect to stabilizer quantum mechanics. It does seem, 
however, like the trichromatic calculus offers greater promise than the 
ad-hoc addition of the Euler angle decomposition to the dichromatic calculus.

Furthermore, with recent advances in Quantomatic\footnote{\url{https://sites.google.com/site/quantomatic/}}, it is now possible to automate rewriting in the trichromatic calculus.

\section{Preliminaries} 

We assume the reader is comfortable with the basic notions of category theory 
and quantum information theory, as well as Dirac notation and standard bases 
in $\mathbb C^2$. Here we present some additional definitions and notation 
which will be useful throughout.

\begin{defn}[\dsmc{}]
  A symmetric monoidal $\dag$-category is a symmetric monoidal 
  category $\cat C$ equipped with a contravariant involutive endofunctor 
  $\dagof{\cdot}:{\cat C} \to \cat C$, which acts as the identity on 
  objects and preserves the symmetric monoidal 
  structure~\cite{selinger2007dagger}.
\end{defn}

\begin{note}
  We will consider all our monoidal categories to be strict monoidal 
  categories. By Mac Lane's strictification theorem,
  any monoidal category is monoidally equivalent to a strict monoidal category.
\end{note}

\begin{defn}[$\fdhilb$]
  The \dsmc{} of finite-dimensional complex Hilbert spaces and linear maps between them.
\end{defn}

\begin{defn}[$\fdhilb_{wp}$]
  The \dsmc{} of finite-dimensional complex Hilbert spaces and linear maps modulo the relation 
  $f\equiv g$ if $\exists z\in \mathbb{C}, z\neq 0: f=zg$. We do this mainly to simplify the following exposition.
\end{defn}

\begin{defn}[$\fdhilb_Q$]
   The full subcategory of $\fdhilb_{wp}$ generated by the objects 
   \[\Set{\underbrace{Q\otimes \cdots \otimes Q}_n | n\geq 0},\] where 
   $Q:=\mathbb{C}^2$. This is essentially the category of qubits.
\end{defn}

We also use the following category which incarnates stabilizer quantum 
mechanics---as defined in~\cite{nielsen2002quantum}---and define it in a 
similar fashion as in ~\cite{coecke2011phase}.

\begin{defn}[$\stab$]
  The subcategory of $\fdhilb_Q$ generated by the following linear maps:
  \begin{itemize}
  \item
    single-qubit Clifford unitaries $:Q\to Q$
  \item
    $
      \delta_\stab:Q\to Q \otimes Q =
      \begin{cases}
        \ket{0}& \mapsto \ket{00}\\
        \ket{1}& \mapsto \ket{11}
      \end{cases}
    $
  \item
    $
      \varepsilon_\stab:Q \to 1=
      \begin{cases}
        \ket{+} &\mapsto 1\\
        \ket{-} &\mapsto 0
      \end{cases}
    $
  \end{itemize}
\end{defn}

\begin{defn}[$C_4$]
  The rotation group of the square. Of abstract group type $\mathbb{Z}/4\mathbb{Z}$ with elements denoted $\set{0,1,2,3}$ and its operation written additively.
\end{defn}

\section{Red and Green graphs}
\subsection{$\rg$ generators}
In a similar manner to the work of ~\cite{duncan2009graph,duncan2010rewriting}, we formalize the dichromatic calculus---described in detail in ~\cite{coecke2008interacting,coecke2011interacting}---as a symmetric monoidal category. We define a category $\rg$ where the objects are $n$-fold monoidal products of an object $*$, denoted $*^n$. In $\rg$, a morphism from $*^m$ to $*^n$ is a dichromatic open digraph from $m$ wires to $n$ wires, built from the generators below:
\begin{equation}
\generators{go}{\quad}
\qquad\quad
\hd
\qquad\quad
\generators{ro}{\quad}
\end{equation}
where $\theta$ was allowed to be any real number in ~\cite{coecke2011interacting}, we restrict $\theta$ to take values in $C_4$, restricting it to 4 values. Except for $\hd$, which is considered colourless, each generator is of one of two colours---hence the name dichromatic. Additionally, the identity morphism on $*$ is represented as the straight wire $\identity$. The generators $\rot{go}$ and $\rot{ro}$ are called phase gates. Composition is performed by plugging open ended wires together. One can notice that there are two types of open-ended wires. Ones going into a graph and ones going out of one. Informally, this corresponds to the ``input'' and ``output'' of the graph. We also mention here that we ignore connected components of a graph which are connected to neither input nor output. This is in order to not have to deal with scalars. 

\subsection{$\rg$ relations}

$\rg$ morphisms are also subject to the equations depicted below. The motivations behind these rules are explained in detail in ~\cite{coecke2008interacting,coecke2011interacting}.
\begin{equation}
  \text{Only the graph topology matters.}
\end{equation}
\begin{equation}
  \text{All equations hold under flip of arrows and negation of angles ($\dag$).}
  \label{eq:rgdag}
\end{equation}
\begin{equation}
\begin{tikzpicture}
	\begin{pgfonlayer}{nodelayer}
		\node [style=none] (0) at (-0.75, 1) {};
		\node [style=none] (1) at (0, 1) {$\cdots$};
		\node [style=none] (2) at (0.75, 1) {};
		\node [style=go] (3) at (0, -0) {$0$};
		\node [style=none] (4) at (-0.75, -1) {};
		\node [style=none] (5) at (0, -1) {$\cdots$};
		\node [style=none] (6) at (0.75, -1) {};
	\end{pgfonlayer}
	\begin{pgfonlayer}{edgelayer}
		\draw (3) to (6.center);
		\draw (0.center) to (3);
		\draw (2.center) to (3);
		\draw (3) to (4.center);
	\end{pgfonlayer}
\end{tikzpicture}
=
\begin{tikzpicture}
	\begin{pgfonlayer}{nodelayer}
		\node [style=none] (0) at (-0.75, 1) {};
		\node [style=none] (1) at (0, 1) {$\cdots$};
		\node [style=none] (2) at (0.75, 1) {};
		\node [style=go] (3) at (0, -0) {};
		\node [style=none] (4) at (-0.75, -1) {};
		\node [style=none] (5) at (0, -1) {$\cdots$};
		\node [style=none] (6) at (0.75, -1) {};
	\end{pgfonlayer}
	\begin{pgfonlayer}{edgelayer}
		\draw (3) to (6.center);
		\draw (0.center) to (3);
		\draw (2.center) to (3);
		\draw (3) to (4.center);
	\end{pgfonlayer}
\end{tikzpicture}
\qqq
  \begin{tikzpicture}
	\begin{pgfonlayer}{nodelayer}
		\node [style=none] (0) at (0, 1.25) {};
		\node [style=go] (1) at (0, -0) {};
		\node [style=none] (2) at (0, -1.25) {};
	\end{pgfonlayer}
	\begin{pgfonlayer}{edgelayer}
		\draw (0.center) to (1);
		\draw (1) to (2.center);
	\end{pgfonlayer}
\end{tikzpicture}
~=~
\identity
\qqq
  \begin{tikzpicture}
	\begin{pgfonlayer}{nodelayer}
		\node [style=none] (0) at (-2, 1) {};
		\node [style=none] (1) at (-1.25, 1) {$\cdots$};
		\node [style=none] (2) at (-0.5, 1) {};
		\node [style=none] (3) at (0.5, 1) {};
		\node [style=none] (4) at (1.25, 1) {$\cdots$};
		\node [style=none] (5) at (2, 1) {};
		\node [style=go] (6) at (-1.25, -0) {$\alpha$};
		\node [style=none] (7) at (0, -0) {$\vdots$};
		\node [style=go] (8) at (1.25, -0) {$\beta$};
		\node [style=none] (9) at (-2, -1) {};
		\node [style=none] (10) at (-1.25, -1) {$\cdots$};
		\node [style=none] (11) at (-0.5, -1) {};
		\node [style=none] (12) at (0.5, -1) {};
		\node [style=none] (13) at (1.25, -1) {$\cdots$};
		\node [style=none] (14) at (2, -1) {};
	\end{pgfonlayer}
	\begin{pgfonlayer}{edgelayer}
		\draw (6) to (11.center);
		\draw[bend left] (8) to (6);
		\draw[bend left] (6) to (8);
		\draw (8) to (12.center);
		\draw (6) to (9.center);
		\draw (5.center) to (8);
		\draw (3.center) to (8);
		\draw (2.center) to (6);
		\draw (0.center) to (6);
		\draw (8) to (14.center);
	\end{pgfonlayer}
\end{tikzpicture}
=
\begin{tikzpicture}
	\begin{pgfonlayer}{nodelayer}
		\node [style=none] (0) at (-1.5, 1) {};
		\node [style=none] (1) at (-1, 1) {$\cdots$};
		\node [style=none] (2) at (-0.5, 1) {};
		\node [style=none] (3) at (0.5, 1) {};
		\node [style=none] (4) at (1, 1) {$\cdots$};
		\node [style=none] (5) at (1.5, 1) {};
		\node [style=none] (6) at (1.5, 1) {};
		\node [style=go] (7) at (0, -0) {$\alpha + \beta$};
		\node [style=none] (8) at (-1.5, -1) {};
		\node [style=none] (9) at (-1, -1) {$\cdots$};
		\node [style=none] (10) at (-0.5, -1) {};
		\node [style=none] (11) at (0.5, -1) {};
		\node [style=none] (12) at (1, -1) {$\cdots$};
		\node [style=none] (13) at (1.5, -1) {};
	\end{pgfonlayer}
	\begin{pgfonlayer}{edgelayer}
		\draw (7) to (8.center);
		\draw (5.center) to (7);
		\draw (7) to (13.center);
		\draw (2.center) to (7);
		\draw (0.center) to (7);
		\draw (7) to (11.center);
		\draw (7) to (10.center);
		\draw (3.center) to (7);
	\end{pgfonlayer}
\end{tikzpicture}
\end{equation}

The three equations above imply that the quadruples of generators 
$\left(\unit{go},\mul{go},\comul{go},\counit{go}\right)$ and \\
$\left(\unit{ro},\mul{ro},\comul{ro},\counit{ro}\right)$ form $\dag$-special commutative Frobenius 
algebras as defined and exposed in~\cite{street:3930,kock2004frobenius,coecke2010compositional}.

\begin{gather}
\begin{tikzpicture}
	\begin{pgfonlayer}{nodelayer}
		\node [style=none] (0) at (-0.5, 1.5) {};
		\node [style=none] (1) at (0.5, 1.5) {};
		\node [style=go] (2) at (-0.5, 0.5) {};
		\node [style=go] (3) at (0.5, 0.5) {};
		\node [style=ro] (4) at (-0.5, -0.5) {};
		\node [style=ro] (5) at (0.5, -0.5) {};
		\node [style=none] (6) at (-0.5, -1.5) {};
		\node [style=none] (7) at (0.5, -1.5) {};
	\end{pgfonlayer}
	\begin{pgfonlayer}{edgelayer}
		\draw (2) to (5);
		\draw (2) to (4);
		\draw (4) to (6.center);
		\draw (3) to (5);
		\draw (0.center) to (2);
		\draw (3) to (4);
		\draw (5) to (7.center);
		\draw (1.center) to (3);
	\end{pgfonlayer}
\end{tikzpicture}
~=
\begin{tikzpicture}
	\begin{pgfonlayer}{nodelayer}
		\node [style=none] (0) at (-0.8, 1.1) {};
		\node [style=none] (1) at (0.8, 1.1) {};
		\node [style=ro] (2) at (0, 0.5) {};
		\node [style=go] (4) at (0, -0.5) {};
		\node [style=none] (5) at (-0.8, -1.1) {};
		\node [style=none] (6) at (0.8, -1.1) {};
	\end{pgfonlayer}
	\begin{pgfonlayer}{edgelayer}
		\draw (4) to (6.center);
		\draw (4) to (5.center);
		\draw (0.center) to (2);
		\draw (1.center) to (2);
		\draw (2) to (4);
	\end{pgfonlayer}
\end{tikzpicture}
\qqq
\begin{tikzpicture}
	\begin{pgfonlayer}{nodelayer}
		\node [style=ro] (0) at (0, 0.8) {};
		\node [style=go] (1) at (0, -0.2) {};
		\node [style=none] (2) at (-0.8, -0.8) {};
		\node [style=none] (3) at (0.8, -0.8) {};
	\end{pgfonlayer}
	\begin{pgfonlayer}{edgelayer}
		\draw (1) to (3.center);
		\draw (0.center) to (1);
		\draw (1) to (2.center);
	\end{pgfonlayer}
\end{tikzpicture}
=~
  \begin{tikzpicture}
	\begin{pgfonlayer}{nodelayer}
		\node [style=ro] (2) at (-0.5, 0.5) {};
		\node [style=ro] (3) at (0.5, 0.5) {};
		\node [style=none] (4) at (-0.5, -0.5) {};
		\node [style=none] (5) at (0.5, -0.5) {};
	\end{pgfonlayer}
	\begin{pgfonlayer}{edgelayer}
		\draw (2) to (4);
		\draw (3) to (5);
	\end{pgfonlayer}
\end{tikzpicture}
\qqq
\begin{tikzpicture}
	\begin{pgfonlayer}{nodelayer}
		\node [style=go] (0) at (0, 0.4) {};
		\node [style=none] (1) at (-0.6, -0.4) {};
		\node [style=none] (2) at (0.6, -0.4) {};
	\end{pgfonlayer}
	\begin{pgfonlayer}{edgelayer}
		\draw (0) to (2.center);
		\draw (0) to (1.center);
	\end{pgfonlayer}
\end{tikzpicture}
~=~
\begin{tikzpicture}
	\begin{pgfonlayer}{nodelayer}
		\node [style=ro] (0) at (0, 0.4) {};
		\node [style=none] (1) at (-0.6, -0.4) {};
		\node [style=none] (2) at (0.6, -0.4) {};
	\end{pgfonlayer}
	\begin{pgfonlayer}{edgelayer}
		\draw (0) to (2.center);
		\draw (0) to (1.center);
	\end{pgfonlayer}
\end{tikzpicture}
\\
  \begin{tikzpicture}
	\begin{pgfonlayer}{nodelayer}
		\node [style=none] (0) at (0, 1.3) {};
		\node [style=ro] (1) at (0, 0.3) {$2$};
		\node [style=go] (2) at (0, -0.7) {};
		\node [style=none] (3) at (-0.8, -1.3) {};
		\node [style=none] (4) at (0.8, -1.3) {};
	\end{pgfonlayer}
	\begin{pgfonlayer}{edgelayer}
		\draw (1) to (2);
		\draw (2) to (4.center);
		\draw (0.center) to (1);
		\draw (2) to (3.center);
	\end{pgfonlayer}
\end{tikzpicture}
=~
\begin{tikzpicture}
	\begin{pgfonlayer}{nodelayer}
		\node [style=none] (0) at (0, 1.3) {};
		\node [style=go] (1) at (0, 0.3) {};
		\node [style=ro] (2) at (-0.8, -0.3) {$2$};
		\node [style=ro] (3) at (0.8, -0.3) {$2$};
		\node [style=none] (4) at (-0.8, -1.3) {};
		\node [style=none] (5) at (0.8, -1.3) {};
	\end{pgfonlayer}
	\begin{pgfonlayer}{edgelayer}
		\draw (1) to (3);
		\draw (0.center) to (1);
		\draw (3) to (5.center);
		\draw (2) to (4.center);
		\draw (1) to (2);
	\end{pgfonlayer}
\end{tikzpicture}
\qqq
  \begin{tikzpicture}
	\begin{pgfonlayer}{nodelayer}
		\node [style=none] (0) at (0, 1.5) {};
		\node [style=go] (1) at (0, 0.5) {$2$};
		\node [style=ro] (2) at (0, -0.5) {$\theta$};
		\node [style=none] (3) at (0, -1.5) {};
	\end{pgfonlayer}
	\begin{pgfonlayer}{edgelayer}
		\draw (1) to (2);
		\draw (0.center) to (1);
		\draw (2) to (3.center);
	\end{pgfonlayer}
\end{tikzpicture}
~=~
\begin{tikzpicture}
	\begin{pgfonlayer}{nodelayer}
		\node [style=none] (0) at (0, 1.5) {};
		\node [style=ro] (1) at (0, 0.5) {$-\theta$};
		\node [style=go] (2) at (0, -0.5) {$2$};
		\node [style=none] (3) at (0, -1.5) {};
	\end{pgfonlayer}
	\begin{pgfonlayer}{edgelayer}
		\draw (1) to (2);
		\draw (0.center) to (1);
		\draw (2) to (3.center);
	\end{pgfonlayer}
\end{tikzpicture}
\qqq
\begin{tikzpicture}
	\begin{pgfonlayer}{nodelayer}
		\node [style=none] (0) at (0, 1) {};
		\node [style=none] (2) at (0, 0) {};
		\node [style=none] (3) at (0, -1) {};
	\end{pgfonlayer}
	\begin{pgfonlayer}{edgelayer}
		\draw [hada,-] (0.center) to (2.center);
		\draw [hada] (2.center) to (3.center);
	\end{pgfonlayer}
\end{tikzpicture}
~=~\identity
\qqq
  \begin{tikzpicture}
	\begin{pgfonlayer}{nodelayer}
		\node [style=none] (0) at (-0.6, 0.8) {};
		\node [style=none] (1) at (0, 0.8) {$\cdots$};
		\node [style=none] (2) at (0.6, 0.8) {};
		\node [style=go] (3) at (0, -0) {$\theta$};
		\node [style=none] (4) at (-0.6, -0.8) {};
		\node [style=none] (5) at (0, -0.8) {$\cdots$};
		\node [style=none] (6) at (0.6, -0.8) {};
	\end{pgfonlayer}
	\begin{pgfonlayer}{edgelayer}
		\draw (0.center) to (3);
		\draw (3) to (6.center);
		\draw (2.center) to (3);
		\draw (3) to (4.center);
	\end{pgfonlayer}
\end{tikzpicture}
~=
  \begin{tikzpicture}
	\begin{pgfonlayer}{nodelayer}
		\node [style=none] (0) at (-1.2, 1.6) {};
		\node [style=none] (1) at (0, 1.6) {$\cdots$};
		\node [style=none] (2) at (1.2, 1.6) {};
		\node [style=ro] (3) at (0, -0) {$\theta$};
		\node [style=none] (4) at (-1.2, -1.6) {};
		\node [style=none] (5) at (0, -1.6) {$\cdots$};
		\node [style=none] (6) at (1.2, -1.6) {};
	\end{pgfonlayer}
	\begin{pgfonlayer}{edgelayer}
		\draw [hada] (0.center) to (3);
		\draw [hada] (3) to (6.center);
		\draw [hada] (2.center) to (3);
		\draw [hada] (3) to (4.center);
	\end{pgfonlayer}
\end{tikzpicture}
\end{gather}

The spider rule gives the subcategory of $\rg$ generated by the phase gates a group structure. In this case, the group is $C_4*C_4$, the free product of 2 copies of $C_4$---one for each colour.

The last two rules can be used to prove the following equation.
\begin{equation}
  \text{All rules hold under flip of colours.}
\label{eq:rgcyc}
\end{equation}

\subsection{$\dag$ Structure}
The symmetric monoidal category $\rg$ can further be made into a \dsmc{} by having $\dag$ act on the generators like so:
\[\begin{array}{c@{\qquad}c@{\qquad}c@{\qquad}c@{\qquad}c}
  \dagof{\unit{go}}=~\counit{go}
&
  \dagof{\rot{go}}=~\rot[$-\theta$]{go}
&
  \dagof{\counit{go}}=~\unit{go}
&
  \dagof{\mul{go}}=\comul{go}
&
  \dagof{\comul{go}}=\mul{go}
\\
  \dagof{\unit{ro}}=~\counit{ro}
&
  \dagof{\rot{ro}}=~\rot[$-\theta$]{ro}
&
  \dagof{\counit{ro}}=~\unit{ro}
&
  \dagof{\mul{ro}}=\comul{ro}
&
  \dagof{\comul{ro}}=\mul{ro}
\end{array}
\]
where functoriality of $\dagof{\cdot}$ is guaranteed by Rule~\eqref{eq:rgdag}

\subsection{$\rg$ interpretation}
So far we have described a graphical category from generators and relations but with no explicit quantum content, but in fact these digraphs are meant to represent quantum maps. Here, we provide an interpretation for these digraphs by describing a monoidal functor $\rgtostab{\cdot}:\rg\to\stab$, taking $*$ to $\mathbb{C}^2$ and mapping the morphisms like so (as expressed in Dirac notation):

\begin{gather*}
    \rgtostab{\unit{go}} ~=~ \ket{+}
    \qqq
    \rgtostab{\rot{go}} ~=~ \ket0\bra0 ~+~ e^{i\frac{\pi}{2}\theta}\ket1\bra1
    \qqq
    \rgtostab{\counit{go}} ~=~ \bra{+}
  \\
  \rgtostab{\mul{go}} ~=~ \ket{0}\bra{00}~+~\ket{1}\bra{11}
  \qqq
  \rgtostab{\comul{go}} ~=~ \ket{00}\bra{0} ~+~\ket{11}\bra{1}
\\
    \rgtostab{\unit{ro}} ~=~ \ket{0}
  \qqq
  \rgtostab{\rot{ro}} ~=~ \ket+\bra+ ~+~ e^{i\frac{\pi}{2}\theta}\ket-\bra-
  \qqq
    \rgtostab{\counit{ro}} ~=~ \bra{0}
  \\
    \rgtostab{\comul{ro}} ~=~ \ket{++}\bra{+} ~+~ \ket{--}\bra{-}
    \qqq
    \rgtostab{\mul{ro}} ~=~ \ket{+}\bra{++} ~+~ \ket{-}\bra{--}
\\
\rgtostab{\hd}
  ~=~ \ket{+}\bra{0} ~+~ \ket{-}\bra{1}
\end{gather*}

The calculus of $\rg$ turns out to be universal, as exposed in~\cite{coecke2011interacting}.

\begin{prop}
  $\rgtostab{\cdot}$ is indeed a symmetric monoidal $\dag$-functor.
  \label{prop:rginter}
\end{prop}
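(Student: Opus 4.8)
The plan is to regard $\rg$ as a symmetric monoidal $\dag$-category presented by generators and relations, so that fixing $\rgtostab{\cdot}$ on the generators (as tabulated above) extends to a well-defined symmetric monoidal $\dag$-functor \emph{exactly when} the listed images satisfy, as genuine linear maps in $\stab$, every defining equation of $\rg$. Thus the bulk of the work is checking that the relations are respected; preservation of the monoidal product, of the symmetry, and of the $\dag$ are then short verifications on generators that propagate by functoriality.

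Before checking relations I would confirm that every generator's image actually lands in the subcategory $\stab$. The green maps $\rgtostab{\mul{go}}$, $\rgtostab{\comul{go}}$, $\rgtostab{\unit{go}}$, $\rgtostab{\counit{go}}$ are assembled from $\delta_\stab$, $\varepsilon_\stab$ and single-qubit Cliffords; $\rgtostab{\hd}$ is the Hadamard, a Clifford unitary; each phase gate $\rgtostab{\rot{go}}$ (and its red counterpart) has angle a multiple of $\tfrac{\pi}{2}$ because $\theta\in C_4$, hence is Clifford; and the red generators are $H$-conjugates of the green ones. Since $\stab$ is closed under $\circ$ and $\otimes$, the image of every $\rg$-morphism lies in $\stab$.

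The heart of the argument is discharging the relations. I would first show that $\big(\rgtostab{\unit{go}},\rgtostab{\mul{go}},\rgtostab{\comul{go}},\rgtostab{\counit{go}}\big)$ is a $\dag$-special commutative Frobenius algebra on $\complex^2$ --- concretely the copy/match algebra of the computational basis $\{\ket{0},\ket{1}\}$ --- which at one stroke validates the ``topology only'' law, the associativity/commutativity/Frobenius/speciality equations, the green $0$-phase erasure rule, and the green instance of the spider rule; the red generators give the analogous algebra for $\{\ket{+},\ket{-}\}$. The phase-fusion rule reduces to $e^{i\frac{\pi}{2}\alpha}e^{i\frac{\pi}{2}\beta}=e^{i\frac{\pi}{2}(\alpha+\beta)}$, i.e. the group law of $C_4$ inside the phase group. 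The bialgebra-type equations between the two colours express precisely that $\{\ket{0},\ket{1}\}$ and $\{\ket{+},\ket{-}\}$ are mutually unbiased (complementary), a direct computation in $\complex^2$, while the Hadamard equations follow from $H\ket{0}=\ket{+}$, $H\ket{1}=\ket{-}$ and $H^2=\id$. Because the rules are declared closed under the $\dag$-symmetry \eqref{eq:rgdag} and the colour-flip \eqref{eq:rgcyc}, it suffices to verify one representative per symmetry class and then observe that the target maps obey the same two symmetries.

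For the remaining structure, $\rgtostab{\cdot}$ sends $*^n$ to $Q^{\otimes n}$ and is defined to commute with $\otimes$ on generators, and the chosen wire-swap symmetry is sent to the tensor swap on $\complex^2\otimes\complex^2$, giving the symmetric monoidal clause; for the $\dag$ clause I would match the declared action of $\dag$ on each generator with the Hilbert-space adjoint of its image --- for instance $\dagof{\unit{go}}=\counit{go}$ mirrors $\ket{+}^{\dag}=\bra{+}$, and negating the angle of a phase gate mirrors conjugating $e^{i\frac{\pi}{2}\theta}\mapsto e^{-i\frac{\pi}{2}\theta}$ --- so that $\rgtostab{\dagof{f}}=\dagof{\rgtostab{f}}$ holds on generators and hence everywhere by functoriality. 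I expect the main obstacle to be bookkeeping rather than insight: systematically discharging \emph{every} relation, including all those implied by the two symmetries, while working modulo nonzero scalars in $\fdhilb_{wp}$ so that scalar prefactors (notably from the bialgebra law) never obstruct the required equalities. Isolating the Frobenius-algebra and mutual-unbiasedness facts at the outset is what keeps the verification manageable.
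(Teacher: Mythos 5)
Your proposal is correct and takes essentially the same approach as the paper: the paper's proof consists precisely of checking that each rule $f=g$ of $\rg$ satisfies $\rgtostab{f}=\rgtostab{g}$, that $\rgtostab{\cdot}$ respects the symmetric monoidal structure on generators, and that $\rgtostab{g^\dag}=\rgtostab{g}^\dag$ for each generator $g$. Your organization of the relation-checking via the computational-basis and $\ket{\pm}$-basis Frobenius algebras, mutual unbiasedness, and the $\dag$/colour-flip symmetries is just a sensible elaboration of what the paper leaves as a routine verification.
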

 
\begin{proof}
  This involves checking for each rule $f=g$ in $\rg$ that 
  $\rgtostab{f}=\rgtostab{g}$, that $\rgtostab{\cdot}$ respects the symmetric
  monoidal structure on the generators, and for each generator $g$, we have 
  $\rgtostab{g}^\dag=\rgtostab{g^\dag}$.
\end{proof}

\begin{figure*}
  \centering
  \includegraphics[scale=0.32]{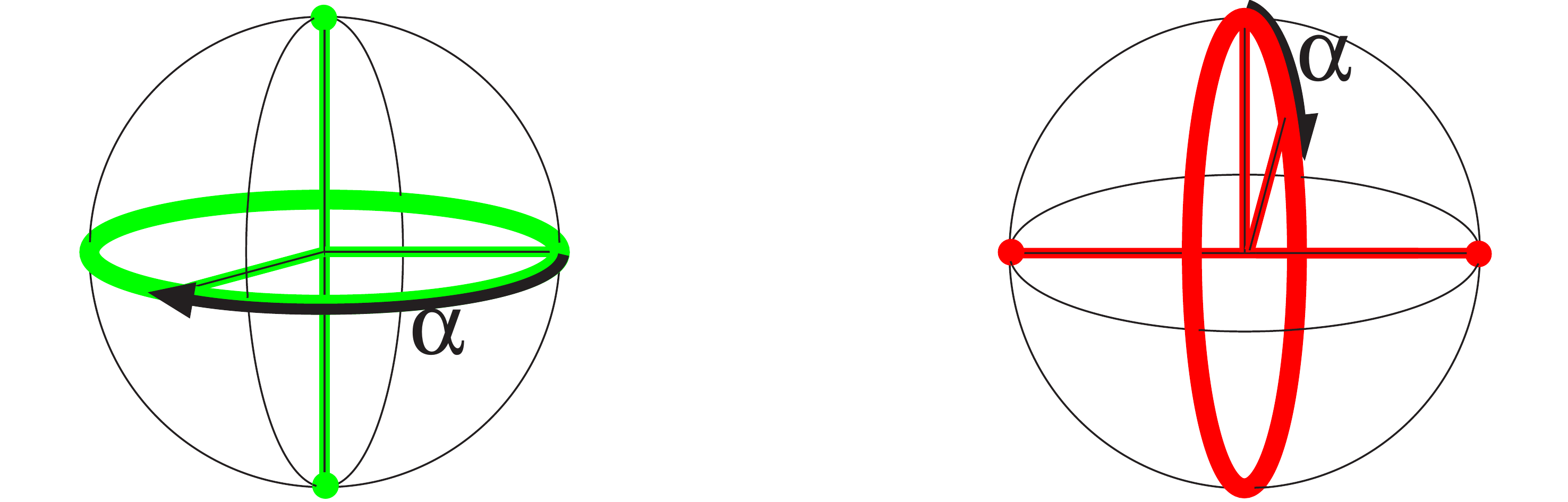}
  \caption{$\rg$ Bloch spheres}
  \label{fig:rgbloch}
\end{figure*}

Figure~\ref{fig:rgbloch} depicts the interpretation of the rotation gates $\rot{go}$ and $\rot{ro}$. The arrow-tail represents the locations of the deleting points $\unit{go}$ and $\unit{ro}$. Although the rotations in the axis which is perpendicular to both of these can be expressed in terms of red and green rotations, there is no primitive support for them. This is what we will attempt to remedy in the next section.

\section{Red, Green and Blue digraphs}
Here we introduce a theory of trichromatic diagraphs as a category $\rgb$. Whereas the digraphs in $\rg$ spoke of two complementary observable structures, $\rgb$ speaks of three complementary observable structures, the maximum number that can hold in qubits.

One might ask---why look for another diagrammatic theory if $\rg$ is already 
universal? The reason is expressed in the following proposition.

\begin{prop}
  $\rgtostab{\cdot}$ is unfaithful.
  \label{prop:unfaith}
\end{prop}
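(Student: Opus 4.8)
The plan is to prove unfaithfulness by exhibiting two morphisms $f,g$ of $\rg$ with $f\neq g$ yet $\rgtostab{f}=\rgtostab{g}$. Rather than hunt for an arbitrary such pair, I would exploit the group structure on the phase gates recorded just after the spider rule: the subcategory of $\rg$ generated by $\rot{go}$ and $\rot{ro}$ is, on endomorphisms of $*$, the free product $C_4 * C_4$. Since $\rgtostab{\cdot}$ is a functor (Proposition~\ref{prop:rginter}), its restriction to this subgroup is a group homomorphism into the endomorphisms of $\mathbb{C}^2$ in $\stab$; and because $\stab\subseteq\fdhilb_{wp}$ identifies maps differing by a nonzero scalar, the image sits inside the single-qubit Clifford group taken \emph{modulo global phase}. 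A counting argument then closes the proof.

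First I would pin down this target. Under $\rgtostab{\cdot}$ a green phase gate of angle $\theta$ goes to $\ket0\bra0 + e^{i\frac{\pi}{2}\theta}\ket1\bra1$ and a red one to $\ket+\bra+ + e^{i\frac{\pi}{2}\theta}\ket-\bra-$; for $\theta\in C_4$ the angle $\frac{\pi}{2}\theta$ is a multiple of $\frac{\pi}{2}$, so these are quarter-turns of the Bloch sphere about the $Z$- and $X$-axes, hence Clifford unitaries. Thus the homomorphism lands in the image of the single-qubit Clifford group in $PU(2)\cong SO(3)$, namely the rotation group of the cube, a \emph{finite} group of order $24$. On the other hand $C_4 * C_4$, a free product of two nontrivial finite groups, is infinite, since any alternating product of a nontrivial green gate and a nontrivial red gate has infinite order. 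A homomorphism from an infinite group to a finite one cannot be injective, so its kernel contains some reduced word $w\neq e$. Reading $w$ and the empty word back as $\rg$-morphisms gives $w\neq\identity$ while $\rgtostab{w}=\id=\rgtostab{\identity}$, witnessing unfaithfulness.

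Finally I would record an explicit witness. Let $g$ and $r$ denote the green and red $\frac{\pi}{2}$-phase gates; their images are orthogonal quarter-turns of the Bloch sphere, whose product is a $\frac{2\pi}{3}$-rotation about a body diagonal and so has order three in $SO(3)$. Hence the alternating word $grgrgr$ maps to the identity in $\stab$ yet is a nonidentity morphism of $\rg$. The main obstacle is not the counting, which is immediate, but the input it relies on: that the phase-gate subcategory is genuinely the \emph{free} product $C_4*C_4$, i.e. that $\rg$ imposes no hidden relation collapsing distinct alternating words. This is exactly the assertion made after the spider rule, and it is what guarantees the chosen word is a genuine nonidentity $\rg$-morphism rather than merely a nonidentity element of some a priori larger group.
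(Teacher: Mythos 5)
Your route is genuinely different from the paper's --- the paper simply exhibits the supplementarity pair (green comultiplication, two red $1$-phases, green multiplication, versus the disconnected diagram), both of which interpret to $\ket{-}\bra{-}$, and cites the literature for their non-equality in $\rg$ --- but your argument has a real gap, and it sits exactly at the point you flagged. The sentence after the spider rule cannot be taken at face value: the phase group of $\rg$ is \emph{not} the free product $C_4*C_4$, because one of the listed axioms of $\rg$ is itself a relation between phase words, namely the $\pi$-commutation rule that slides a green $2$ past a red $\theta$ while negating it, $g^2\, r^{\theta} = r^{-\theta}\, g^2$ (writing $g,r$ for the green and red $1$-phase gates). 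In $C_4*C_4$ the word $g^2 r g^2 r$ is reduced and nontrivial, but in $\rg$ this axiom gives $(g^2 r)^2 = r^{-1} g^4 r = \mathrm{id}$, so the alternating word $g^2 r$ has order $2$, directly refuting your claim that every nontrivial alternating word has infinite order. The phase group is only a \emph{quotient} of $C_4*C_4$, as the paper itself states in the Extensions section, so the counting argument ``infinite source, finite target of order $24$'' does not apply as written.

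The deeper problem survives any repair of the counting. In a category presented by generators and relations, showing that a particular word such as $(gr)^3$ is a \emph{nonidentity} morphism requires a model of \emph{all} of $\rg$'s axioms --- spiders, bialgebra, copying, and the Hadamard rules, not merely the visible phase relations --- that separates it from the identity. In fact $\pi$-commutation already yields $(grg)^2 = \mathrm{id}$ and $(rgr)^2 = \mathrm{id}$, and $(gr)^3 = (grg)(rgr)$ is a product of two involutions, so your witness is nontrivial if and only if $grg \neq rgr$ in $\rg$. That inequality is essentially the Euler-decomposition obstruction (in $\rgp$, where $H = grg$ is imposed, one does derive $(gr)^3 = \mathrm{id}$), so certifying it is as hard as the Duncan--Perdrix non-derivability theorem and cannot be extracted from a cardinality comparison. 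To salvage your approach you would have to either cite that result for $grg \neq rgr$, or construct a separating model yourself --- for instance, the quotient group cut out by the phase relations does surject onto the infinite dihedral group by sending $g$ and $r$ to the two reflection generators, which shows the abstract phase presentation keeps $(gr)^3$ nontrivial, but you would still need to verify that the remaining diagrammatic axioms of $\rg$ introduce no further collapse on $\hom_{\rg}(*,*)$. The paper's choice of counterexample avoids this trap by resting on a non-derivability fact already established elsewhere.
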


\begin{proof}
  A counterexample is provided by the following morphisms which are not equal in $\rg$, but whose images are equal in $\stab$.
\begin{equation}
\begin{tikzpicture}
	\begin{pgfonlayer}{nodelayer}
		\node [style=none] (0) at (0, 1.6) {};
		\node [style=go] (1) at (0, 0.6) {};
		\node [style=ro] (2) at (-0.8, -0) {$1$};
		\node [style=ro] (3) at (0.8, -0) {$1$};
		\node [style=go] (4) at (0, -0.6) {};
		\node [style=none] (5) at (0, -1.6) {};
	\end{pgfonlayer}
	\begin{pgfonlayer}{edgelayer}
		\draw (2) to (4);
		\draw (1) to (2);
		\draw (1) to (3);
		\draw (3) to (4);
		\draw (4) to (5.center);
		\draw (0.center) to (1);
	\end{pgfonlayer}
\end{tikzpicture}
~\neq~
\begin{tikzpicture}
	\begin{pgfonlayer}{nodelayer}
		\node [style=none] (0) at (0, 1.5) {};
		\node [style=go] (1) at (0, 0.5) {$2$};
		\node [style=go] (2) at (0, -0.5) {$2$};
		\node [style=none] (3) at (0, -1.5) {};
	\end{pgfonlayer}
	\begin{pgfonlayer}{edgelayer}
		\draw (0.center) to (1);
		\draw (2) to (3.center);
	\end{pgfonlayer}
\end{tikzpicture}
\qquad\text{ but }\qquad
\rgtostab{\begin{tikzpicture}
	\begin{pgfonlayer}{nodelayer}
		\node [style=none] (0) at (0, 1.6) {};
		\node [style=go] (1) at (0, 0.6) {};
		\node [style=ro] (2) at (-0.8, -0) {$1$};
		\node [style=ro] (3) at (0.8, -0) {$1$};
		\node [style=go] (4) at (0, -0.6) {};
		\node [style=none] (5) at (0, -1.6) {};
	\end{pgfonlayer}
	\begin{pgfonlayer}{edgelayer}
		\draw (2) to (4);
		\draw (1) to (2);
		\draw (1) to (3);
		\draw (3) to (4);
		\draw (4) to (5.center);
		\draw (0.center) to (1);
	\end{pgfonlayer}
\end{tikzpicture}}
~=~
\rgtostab{\begin{tikzpicture}
	\begin{pgfonlayer}{nodelayer}
		\node [style=none] (0) at (0, 1.5) {};
		\node [style=go] (1) at (0, 0.5) {$2$};
		\node [style=go] (2) at (0, -0.5) {$2$};
		\node [style=none] (3) at (0, -1.5) {};
	\end{pgfonlayer}
	\begin{pgfonlayer}{edgelayer}
		\draw (0.center) to (1);
		\draw (2) to (3.center);
	\end{pgfonlayer}
\end{tikzpicture}}
~=~
\ket{-}\bra{-}\qedhere
\end{equation}

\end{proof}

This is a special case of the supplementary rule which was used extensively in ~\cite{coecke2011three,hillebrand2011quantum}.

Effectively, this means that $\rg$ is not complete with respect to $\stab$. That is, there exist morphisms in $\rg$ which are not equal, but their interpretations are. Despite this, one might still ask why develop a new calculus rather than extend the existing one. The reason was alluded to in the introduction. It is mainly because the three complementary observables lie on an equal footing with respect to the Bloch sphere. However, if we look at the $Z$ and $X$ observables in the dichromatic calculus, they share the same compact structure. If we were to add a third $Y$ observable, it would not be able to share the same compact structure~\cite{coecke2008bases}, and would thus be the odd one out.

\subsection{The category $\rgb$ of trichromatic digraphs}

We define a category $\rgb$ of trichromatic digraphs in a similar manner as we did for the dichromatic category $\rg$. $\rgb$ is a \dsmc{} with as objects, tensor products of $*$. Morphisms in $\rgb$ are represented as open digraphs with coloured nodes and again, the identity morphism on $*$ is represented graphically by: $\identity$

The monoidal product is represented by horizontal disjoint union of digraphs and composition is represented by vertical plugging of wires.

$\rgb$ is generated by (horizontal) disjoint union and (vertical) composition by the following generators:
\begin{equation*}
\generators{rn}{~~}\qquad
\generators{gn}{~~}\qquad
\generators{bn}{~~}
\end{equation*}
where again $\theta$ can be any element of $C_4$. Of special note is the fact that there is no Hadamard gate in $\rgb$. There is instead a pair of gates which take the role of ``colour changers'' which can be defined in terms of the above generators. They will be defined in the next section.

\subsection{RGB Rules}

These following equations hold in $\rgb$.
\begin{equation}
  \text{Only the graph topology matters.}
\end{equation}
\begin{equation}
  \text{All equations hold under flip of arrows and negation of angles ($\dag$).}
  \label{eq:dag}
\end{equation}
\begin{equation}
\begin{tikzpicture}
	\begin{pgfonlayer}{nodelayer}
		\node [style=none] (0) at (-0.75, 1) {};
		\node [style=none] (1) at (0, 1) {$\cdots$};
		\node [style=none] (2) at (0.75, 1) {};
		\node [style=gn] (3) at (0, -0) {$0$};
		\node [style=none] (4) at (-0.75, -1) {};
		\node [style=none] (5) at (0, -1) {$\cdots$};
		\node [style=none] (6) at (0.75, -1) {};
	\end{pgfonlayer}
	\begin{pgfonlayer}{edgelayer}
		\draw (3) to (6.center);
		\draw (0.center) to (3);
		\draw (2.center) to (3);
		\draw (3) to (4.center);
	\end{pgfonlayer}
\end{tikzpicture}
~=~
\begin{tikzpicture}
	\begin{pgfonlayer}{nodelayer}
		\node [style=none] (0) at (-0.75, 1) {};
		\node [style=none] (1) at (0, 1) {$\cdots$};
		\node [style=none] (2) at (0.75, 1) {};
		\node [style=gn] (3) at (0, -0) {};
		\node [style=none] (4) at (-0.75, -1) {};
		\node [style=none] (5) at (0, -1) {$\cdots$};
		\node [style=none] (6) at (0.75, -1) {};
	\end{pgfonlayer}
	\begin{pgfonlayer}{edgelayer}
		\draw (3) to (6.center);
		\draw (0.center) to (3);
		\draw (2.center) to (3);
		\draw (3) to (4.center);
	\end{pgfonlayer}
\end{tikzpicture}
\qqq
  \begin{tikzpicture}
	\begin{pgfonlayer}{nodelayer}
		\node [style=none] (0) at (0, 1.25) {};
		\node [style=gn] (1) at (0, -0) {};
		\node [style=none] (2) at (0, -1.25) {};
	\end{pgfonlayer}
	\begin{pgfonlayer}{edgelayer}
		\draw (0.center) to (1);
		\draw (1) to (2.center);
	\end{pgfonlayer}
\end{tikzpicture}
~=~
\identity
\qqq
\begin{tikzpicture}
	\begin{pgfonlayer}{nodelayer}
		\node [style=none] (0) at (-2, 1) {};
		\node [style=none] (1) at (-1.25, 1) {$\cdots$};
		\node [style=none] (2) at (-0.5, 1) {};
		\node [style=none] (3) at (0.5, 1) {};
		\node [style=none] (4) at (1.25, 1) {$\cdots$};
		\node [style=none] (5) at (2, 1) {};
		\node [style=gn] (6) at (-1.25, -0) {$\alpha$};
		\node [style=none] (7) at (0, -0) {$\vdots$};
		\node [style=gn] (8) at (1.25, -0) {$\beta$};
		\node [style=none] (9) at (-2, -1) {};
		\node [style=none] (10) at (-1.25, -1) {$\cdots$};
		\node [style=none] (11) at (-0.5, -1) {};
		\node [style=none] (12) at (0.5, -1) {};
		\node [style=none] (13) at (1.25, -1) {$\cdots$};
		\node [style=none] (14) at (2, -1) {};
	\end{pgfonlayer}
	\begin{pgfonlayer}{edgelayer}
		\draw (6) to (11.center);
		\draw[bend left] (8) to (6);
		\draw[bend left] (6) to (8);
		\draw (8) to (12.center);
		\draw (6) to (9.center);
		\draw (5.center) to (8);
		\draw (3.center) to (8);
		\draw (2.center) to (6);
		\draw (0.center) to (6);
		\draw (8) to (14.center);
	\end{pgfonlayer}
\end{tikzpicture}
~=~
\begin{tikzpicture}
	\begin{pgfonlayer}{nodelayer}
		\node [style=none] (0) at (-1.5, 1) {};
		\node [style=none] (1) at (-1, 1) {$\cdots$};
		\node [style=none] (2) at (-0.5, 1) {};
		\node [style=none] (3) at (0.5, 1) {};
		\node [style=none] (4) at (1, 1) {$\cdots$};
		\node [style=none] (5) at (1.5, 1) {};
		\node [style=none] (6) at (1.5, 1) {};
		\node [style=gn] (7) at (0, -0) {$\alpha + \beta$};
		\node [style=none] (8) at (-1.5, -1) {};
		\node [style=none] (9) at (-1, -1) {$\cdots$};
		\node [style=none] (10) at (-0.5, -1) {};
		\node [style=none] (11) at (0.5, -1) {};
		\node [style=none] (12) at (1, -1) {$\cdots$};
		\node [style=none] (13) at (1.5, -1) {};
	\end{pgfonlayer}
	\begin{pgfonlayer}{edgelayer}
		\draw (7) to (8.center);
		\draw (5.center) to (7);
		\draw (7) to (13.center);
		\draw (2.center) to (7);
		\draw (0.center) to (7);
		\draw (7) to (11.center);
		\draw (7) to (10.center);
		\draw (3.center) to (7);
	\end{pgfonlayer}
\end{tikzpicture}
\end{equation}

Again, the laws above allow us to state that each of the quadruples of generators $\left(\unit{rn},\mul{rn},\comul{rn},\counit{rn}\right)$, $\left(\unit{gn},\mul{gn},\comul{gn},\counit{gn}\right)$ and $\left(\unit{bn},\mul{bn},\comul{bn},\counit{bn}\right)$ form Frobenius algebras.
\begin{equation}
\begin{tikzpicture}
	\begin{pgfonlayer}{nodelayer}
		\node [style=bn] (0) at (0, 0.8) {};
		\node [style=gn] (1) at (0, -0.2) {};
		\node [style=none] (2) at (-0.8, -0.8) {};
		\node [style=none] (3) at (0.8, -0.8) {};
	\end{pgfonlayer}
	\begin{pgfonlayer}{edgelayer}
		\draw (1) to (3.center);
		\draw (0.center) to (1);
		\draw (1) to (2.center);
	\end{pgfonlayer}
\end{tikzpicture}
~=~
  \begin{tikzpicture}
	\begin{pgfonlayer}{nodelayer}
		\node [style=bn] (2) at (-0.5, 0.5) {};
		\node [style=bn] (3) at (0.5, 0.5) {};
		\node [style=none] (4) at (-0.5, -0.5) {};
		\node [style=none] (5) at (0.5, -0.5) {};
	\end{pgfonlayer}
	\begin{pgfonlayer}{edgelayer}
		\draw (2) to (4);
		\draw (3) to (5);
	\end{pgfonlayer}
\end{tikzpicture}
\qqq
\begin{tikzpicture}
      \begin{pgfonlayer}{nodelayer}
		\node [style=none] (0) at (-0.8, 0.8) {};
		\node [style=none] (1) at (0.8, 0.8) {};
		\node [style=rn] (2) at (0, 0.2) {$3$};
		\node [style=gn] (3) at (0, -0.8) {};
      \end{pgfonlayer}
      \begin{pgfonlayer}{edgelayer}
		\draw (1.center) to (2);
		\draw (0.center) to (2);
		\draw (2) to (3);
      \end{pgfonlayer}
    \end{tikzpicture}
~=~
  \begin{tikzpicture}
	\begin{pgfonlayer}{nodelayer}
		\node [style=none] (2) at (-0.5, 0.5) {};
		\node [style=none] (3) at (0.5, 0.5) {};
		\node [style=gn] (4) at (-0.5, -0.5) {};
		\node [style=gn] (5) at (0.5, -0.5) {};
	\end{pgfonlayer}
	\begin{pgfonlayer}{edgelayer}
		\draw (2) to (4);
		\draw (3) to (5);
	\end{pgfonlayer}
  \end{tikzpicture}
\qqq
\begin{tikzpicture}
	\begin{pgfonlayer}{nodelayer}
		\node [style=none] (0) at (-0.5, 1.5) {};
		\node [style=none] (1) at (0.5, 1.5) {};
		\node [style=gn] (2) at (-0.5, 0.5) {};
		\node [style=gn] (3) at (0.5, 0.5) {};
		\node [style=rn] (4) at (-0.5, -0.5) {$3$};
		\node [style=rn] (5) at (0.5, -0.5) {$3$};
		\node [style=none] (6) at (-0.5, -1.5) {};
		\node [style=none] (7) at (0.5, -1.5) {};
	\end{pgfonlayer}
	\begin{pgfonlayer}{edgelayer}
		\draw (2) to (5);
		\draw (2) to (4);
		\draw (4) to (6.center);
		\draw (3) to (5);
		\draw (0.center) to (2);
		\draw (3) to (4);
		\draw (5) to (7.center);
		\draw (1.center) to (3);
	\end{pgfonlayer}
\end{tikzpicture}
~=
\begin{tikzpicture}
	\begin{pgfonlayer}{nodelayer}
		\node [style=none] (0) at (-0.8, 1.1) {};
		\node [style=none] (1) at (0.8, 1.1) {};
		\node [style=rn] (2) at (0, 0.5) {$3$};
		\node [style=gn] (4) at (0, -0.5) {};
		\node [style=none] (5) at (-0.8, -1.1) {};
		\node [style=none] (6) at (0.8, -1.1) {};
	\end{pgfonlayer}
	\begin{pgfonlayer}{edgelayer}
		\draw (4) to (6.center);
		\draw (4) to (5.center);
		\draw (0.center) to (2);
		\draw (1.center) to (2);
		\draw (2) to (4);
	\end{pgfonlayer}
\end{tikzpicture}
\end{equation}

These laws state that state that the quadruple 
$\left(\unit{bn}, \mul[$3$]{rn}, \comul{gn}, \counit{gn}\right)$ forms a 
bialgebra, as defined in~\cite{kassel1995quantum}. Using Rule~\eqref{eq:cyc} and Rule~\eqref{eq:dag}, we can also show 
that the following quadruples form bialgebras: 
$\left(\unit{gn}, \mul[$3$]{bn}, \comul{rn}, \counit{rn}\right)$, 
$\left(\unit{rn}, \mul[$3$]{gn}, \comul{bn}, \counit{bn}\right)$, 
$\left(\unit{gn}, \mul{gn}, \comul[$1$]{rn}, \counit{bn}\right)$, 
$\left(\unit{rn}, \mul{rn}, \comul[$1$]{bn}, \counit{gn}\right)$, 
$\left(\unit{bn}, \mul{bn}, \comul[$1$]{gn}, \counit{rn}\right)$. 
These laws are perhaps not as nice as the bialgebra laws from $\rg$, but this 
is the price to pay for a theory that reflects the symmetries of the Bloch sphere.
\begin{equation}
\begin{tikzpicture}
	\begin{pgfonlayer}{nodelayer}
		\node [style=none] (0) at (0, 1) {};
		\node [style=gn] (1) at (0, -0) {};
		\node [style=rn] (2) at (1, -0) {};
		\node [style=none] (3) at (0, -1) {};
	\end{pgfonlayer}
	\begin{pgfonlayer}{edgelayer}
		\draw (2) to (1);
		\draw (1) to (3.center);
		\draw (0.center) to (1);
	\end{pgfonlayer}
\end{tikzpicture}
~=~
  \begin{tikzpicture}
	\begin{pgfonlayer}{nodelayer}
		\node [style=none] (0) at (0, 1) {};
		\node [style=gn] (1) at (0, -0) {$1$};
		\node [style=none] (2) at (0, -1) {};
	\end{pgfonlayer}
	\begin{pgfonlayer}{edgelayer}
		\draw (1) to (2.center);
		\draw (0.center) to (1);
	\end{pgfonlayer}
\end{tikzpicture}
\end{equation}

The above equation allows us to express a rotation in term of a multiplication or comultiplication.

\begin{gather}
\begin{tikzpicture}
	\begin{pgfonlayer}{nodelayer}
		\node [style=none] (0) at (0, 1) {};
		\node [style=none] (1) at (0, -1) {};
	\end{pgfonlayer}
	\begin{pgfonlayer}{edgelayer}
		\draw [cr] (0.center) to (1.center);
	\end{pgfonlayer}
\end{tikzpicture}
~:=~
  \begin{tikzpicture}
	\begin{pgfonlayer}{nodelayer}
		\node [style=none] (0) at (0, 1.5) {};
		\node [style=gn] (1) at (0, 0.5) {$1$};
		\node [style=bn] (3) at (0, -0.5) {$1$};
		\node [style=none] (5) at (0, -1.5) {};
	\end{pgfonlayer}
	\begin{pgfonlayer}{edgelayer}
		\draw (0.center) to (1);
		\draw (3) to (5.center);
		\draw (1) to (3);
	\end{pgfonlayer}
\end{tikzpicture}
~=~
\begin{tikzpicture}
	\begin{pgfonlayer}{nodelayer}
		\node [style=none] (0) at (0, 1.5) {};
		\node [style=rn] (1) at (0, 0.5) {$1$};
		\node [style=gn] (3) at (0, -0.5) {$1$};
		\node [style=none] (5) at (0, -1.5) {};
	\end{pgfonlayer}
	\begin{pgfonlayer}{edgelayer}
		\draw (0.center) to (1);
		\draw (3) to (5.center);
		\draw (1) to (3);
	\end{pgfonlayer}
\end{tikzpicture}
~=~
\begin{tikzpicture}
	\begin{pgfonlayer}{nodelayer}
		\node [style=none] (0) at (0, 1.5) {};
		\node [style=bn] (1) at (0, 0.5) {$1$};
		\node [style=rn] (3) at (0, -0.5) {$1$};
		\node [style=none] (5) at (0, -1.5) {};
	\end{pgfonlayer}
	\begin{pgfonlayer}{edgelayer}
		\draw (0.center) to (1);
		\draw (3) to (5.center);
		\draw (1) to (3);
	\end{pgfonlayer}
\end{tikzpicture}
\qqq
\begin{tikzpicture}
	\begin{pgfonlayer}{nodelayer}
		\node [style=none] (0) at (0, 1) {};
		\node [style=none] (1) at (0, -1) {};
	\end{pgfonlayer}
	\begin{pgfonlayer}{edgelayer}
		\draw [cl] (0.center) to (1.center);
	\end{pgfonlayer}
\end{tikzpicture}
~:=~
\begin{tikzpicture}
	\begin{pgfonlayer}{nodelayer}
		\node [style=none] (0) at (0, 1.5) {};
		\node [style=none] (1) at (0, -0) {};
		\node [style=none] (2) at (0, -1.5) {};
	\end{pgfonlayer}
	\begin{pgfonlayer}{edgelayer}
		\draw [cr] (1.center) to (2.center);
		\draw [cr,-] (0.center) to (1.center);
	\end{pgfonlayer}
\end{tikzpicture}
\qqq
\overbrace{\underbrace{
\begin{tikzpicture}
	\begin{pgfonlayer}{nodelayer}
		\node [style=none] (0) at (-1.2, 1.6) {};
		\node [style=none] (1) at (1.2, 1.6) {};
		\node [style=bn] (2) at (0, -0) {$\theta$};
		\node [style=none] (3) at (-1.2, -1.6) {};
		\node [style=none] (4) at (1.2, -1.6) {};
	\end{pgfonlayer}
	\begin{pgfonlayer}{edgelayer}
		\draw [cr] (2) to (4.center);
		\draw [cr] (2) to (3.center);
		\draw [cl] (1.center) to (2);
		\draw [cl] (0.center) to (2);
	\end{pgfonlayer}
\end{tikzpicture}
}_n}^m
=~
  \overbrace{\underbrace{\begin{tikzpicture}
	\begin{pgfonlayer}{nodelayer}
		\node [style=none] (0) at (-0.6, 0.8) {};
		\node [style=none] (1) at (0, 1) {$\cdots$};
		\node [style=none] (2) at (0.6, 0.8) {};
		\node [style=gn] (3) at (0, -0) {$\theta$};
		\node [style=none] (4) at (-0.6, -0.8) {};
		\node [style=none] (5) at (0, -1) {$\cdots$};
		\node [style=none] (6) at (0.6, -0.8) {};
	\end{pgfonlayer}
	\begin{pgfonlayer}{edgelayer}
		\draw (2.center) to (3);
		\draw (3) to (6.center);
		\draw (0.center) to (3);
		\draw (3) to (4.center);
	\end{pgfonlayer}
\end{tikzpicture}}_n}^m
~=
  \overbrace{\underbrace{
\begin{tikzpicture}
	\begin{pgfonlayer}{nodelayer}
		\node [style=none] (0) at (-1.2, 1.6) {};
		\node [style=none] (1) at (1.2, 1.6) {};
		\node [style=rn] (2) at (0, -0) {$\theta$};
		\node [style=none] (3) at (-1.2, -1.6) {};
		\node [style=none] (4) at (1.2, -1.6) {};
	\end{pgfonlayer}
	\begin{pgfonlayer}{edgelayer}
		\draw [cl] (2) to (4.center);
		\draw [cl] (2) to (3.center);
		\draw [cr] (1.center) to (2);
		\draw [cr] (0.center) to (2);
	\end{pgfonlayer}
\end{tikzpicture}}_n}^m
\end{gather}

The above rules demonstrate the use of the $\identity[cr]$ and $\identity[cl]$ gates as colour rotation gates. They can be thought of as the analogues of the Hadamard gate from $\rg$. They also allow any node of any one colour to be expressed in terms of the other two colours.
\begin{equation}
  \begin{tikzpicture}
	\begin{pgfonlayer}{nodelayer}
		\node [style=none] (0) at (0, 1) {};
		\node [style=none] (1) at (0, -1) {};
        \node [style=none] at (0.5, 0) {};
        \node [style=none] at (-0.5, 0) {};
	\end{pgfonlayer}
	\begin{pgfonlayer}{edgelayer}
		\draw [ytick] (0.center) to (1.center);
	\end{pgfonlayer}
\end{tikzpicture}
~:=~
\begin{tikzpicture}
	\begin{pgfonlayer}{nodelayer}
		\node [style=none] (0) at (-1, 1) {};
		\node [style=gn] (1) at (0.5, 0.5) {};
		\node [style=rn] (2) at (-0.5, -0.5) {};
		\node [style=none] (3) at (1, -1) {};
	\end{pgfonlayer}
	\begin{pgfonlayer}{edgelayer}
		\draw (1) to (2);
		\draw (1) to (3.center);
		\draw (0.center) to (2);
	\end{pgfonlayer}
\end{tikzpicture}
~=~
\begin{tikzpicture}
	\begin{pgfonlayer}{nodelayer}
		\node [style=none] (0) at (-1, 1) {};
		\node [style=rn] (1) at (0.5, 0.5) {};
		\node [style=gn] (2) at (-0.5, -0.5) {};
		\node [style=none] (3) at (1, -1) {};
	\end{pgfonlayer}
	\begin{pgfonlayer}{edgelayer}
		\draw (1) to (2);
		\draw (1) to (3.center);
		\draw (0.center) to (2);
	\end{pgfonlayer}
\end{tikzpicture}
~=~
  \begin{tikzpicture}
	\begin{pgfonlayer}{nodelayer}
		\node [style=none] (0) at (0, 1) {};
		\node [style=rn] (1) at (0, -0) {$2$};
		\node [style=none] (2) at (0, -1) {};
	\end{pgfonlayer}
	\begin{pgfonlayer}{edgelayer}
		\draw (0.center) to (1);
		\draw (1) to (2.center);
	\end{pgfonlayer}
\end{tikzpicture}
\end{equation}
\begin{equation}
  \begin{tikzpicture}
	\begin{pgfonlayer}{nodelayer}
		\node [style=none] (0) at (0, 1) {};
		\node [style=none] (1) at (0, -1) {};
        \node [style=none] at (0.5, 0) {};
        \node [style=none] at (-0.5, 0) {};
	\end{pgfonlayer}
	\begin{pgfonlayer}{edgelayer}
		\draw [ctick] (0.center) to (1.center);
	\end{pgfonlayer}
\end{tikzpicture}
~:=~
\begin{tikzpicture}
	\begin{pgfonlayer}{nodelayer}
		\node [style=none] (0) at (-1, 1) {};
		\node [style=bn] (1) at (0.5, 0.5) {};
		\node [style=gn] (2) at (-0.5, -0.5) {};
		\node [style=none] (3) at (1, -1) {};
	\end{pgfonlayer}
	\begin{pgfonlayer}{edgelayer}
		\draw (1) to (2);
		\draw (1) to (3.center);
		\draw (0.center) to (2);
	\end{pgfonlayer}
\end{tikzpicture}
~=~
\begin{tikzpicture}
	\begin{pgfonlayer}{nodelayer}
		\node [style=none] (0) at (-1, 1) {};
		\node [style=gn] (1) at (0.5, 0.5) {};
		\node [style=bn] (2) at (-0.5, -0.5) {};
		\node [style=none] (3) at (1, -1) {};
	\end{pgfonlayer}
	\begin{pgfonlayer}{edgelayer}
		\draw (1) to (2);
		\draw (1) to (3.center);
		\draw (0.center) to (2);
	\end{pgfonlayer}
\end{tikzpicture}
~=~
  \begin{tikzpicture}
	\begin{pgfonlayer}{nodelayer}
		\node [style=none] (0) at (0, 1) {};
		\node [style=gn] (1) at (0, -0) {$2$};
		\node [style=none] (2) at (0, -1) {};
	\end{pgfonlayer}
	\begin{pgfonlayer}{edgelayer}
		\draw (0.center) to (1);
		\draw (1) to (2.center);
	\end{pgfonlayer}
\end{tikzpicture}
\end{equation}
\begin{equation}
  \begin{tikzpicture}
	\begin{pgfonlayer}{nodelayer}
		\node [style=none] (0) at (0, 1) {};
		\node [style=none] (1) at (0, -1) {};
        \node [style=none] at (0.5, 0) {};
        \node [style=none] at (-0.5, 0) {};
	\end{pgfonlayer}
	\begin{pgfonlayer}{edgelayer}
		\draw [mtick] (0.center) to (1.center);
	\end{pgfonlayer}
\end{tikzpicture}
~:=~
\begin{tikzpicture}
	\begin{pgfonlayer}{nodelayer}
		\node [style=none] (0) at (-1, 1) {};
		\node [style=rn] (1) at (0.5, 0.5) {};
		\node [style=bn] (2) at (-0.5, -0.5) {};
		\node [style=none] (3) at (1, -1) {};
	\end{pgfonlayer}
	\begin{pgfonlayer}{edgelayer}
		\draw (1) to (2);
		\draw (1) to (3.center);
		\draw (0.center) to (2);
	\end{pgfonlayer}
\end{tikzpicture}
~=~
\begin{tikzpicture}
	\begin{pgfonlayer}{nodelayer}
		\node [style=none] (0) at (-1, 1) {};
		\node [style=bn] (1) at (0.5, 0.5) {};
		\node [style=rn] (2) at (-0.5, -0.5) {};
		\node [style=none] (3) at (1, -1) {};
	\end{pgfonlayer}
	\begin{pgfonlayer}{edgelayer}
		\draw (1) to (2);
		\draw (1) to (3.center);
		\draw (0.center) to (2);
	\end{pgfonlayer}
\end{tikzpicture}
~=~
  \begin{tikzpicture}
	\begin{pgfonlayer}{nodelayer}
		\node [style=none] (0) at (0, 1) {};
		\node [style=bn] (1) at (0, -0) {$2$};
		\node [style=none] (2) at (0, -1) {};
	\end{pgfonlayer}
	\begin{pgfonlayer}{edgelayer}
		\draw (0.center) to (1);
		\draw (1) to (2.center);
	\end{pgfonlayer}
\end{tikzpicture}
\end{equation}

We define these convenient maps, termed dualizers, which are used to flip the direction of arrows. The colour yellow was chosen for the red/green dualizer as it is the additive colour combination of red and green, and similarly for cyan and magenta. When Equation~\ref{eq:cyc} is used to permute colours of a diagram, the dualizers must also be permuted, with accordance to their decomposition into red, green and blue nodes.
The dualizer notation is inspired by the notation used for the dualizer in ~\cite{coecke2011interacting} in the case of a pair of complementary observables with incompatible compact structures. In $\rgb$, we have three pairs of complementary observables with incompatible compact structures, hence we have three dualizers. In $\rg$, the pair of complementary observables has compatible compact structure, so the dualizer reduces to the identity:
\begin{equation}
  \begin{tikzpicture}
	\begin{pgfonlayer}{nodelayer}
		\node [style=none] (0) at (-0.7, 0.5) {};
		\node [style=ro] (1) at (0.4, 0.3) {};
		\node [style=go] (2) at (-0.4, -0.3) {};
		\node [style=none] (3) at (0.7, -0.5) {};
	\end{pgfonlayer}
	\begin{pgfonlayer}{edgelayer}
		\draw (0.center) to (2);
		\draw (1) to (2);
		\draw (1) to (3.center);
	\end{pgfonlayer}
\end{tikzpicture}
~=~
\begin{tikzpicture}
	\begin{pgfonlayer}{nodelayer}
		\node [style=none] (0) at (-0.7, 0.5) {};
		\node [style=go] (1) at (0.4, 0.3) {};
		\node [style=go] (2) at (-0.4, -0.3) {};
		\node [style=none] (3) at (0.7, -0.5) {};
	\end{pgfonlayer}
	\begin{pgfonlayer}{edgelayer}
		\draw (0.center) to (2);
		\draw (1) to (2);
		\draw (1) to (3.center);
	\end{pgfonlayer}
\end{tikzpicture}
~=~
\rot[]{go}
~=~
\identity
\end{equation}

One can notice that the rules which held for the red and green nodes in $\rg$ don't translate directly to the red and green dots of $\rgb$. This is made clearer by the interpretation in $\stab$ given below. A translation which does preserve this interpretation is also given further down.

The rules of $\rgb$ talk a lot about the rotation group of the Bloch sphere, specifically an octahedral subgroup of it. In fact, it is in a sense ``complete'' for the octahedral group, as demonstrated by the following result

\begin{prop}
  The octahedral group $O$ embeds faithfully into $\hom_{\rgb}(*,*)$. It is 
  isomorphic to the group generated by $\rot[$1$]{rn}$, $\rot[$1$]{gn}$ and $\rot[$1$]{bn}$
  \label{prop:octa}
\end{prop}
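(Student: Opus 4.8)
The plan is to pin the order of the subgroup $H:=\langle \rot[$1$]{rn},\rot[$1$]{gn},\rot[$1$]{bn}\rangle\subseteq\hom_{\rgb}(*,*)$ between $24$ and $24$: an upper bound by deriving a presentation of $O$ purely inside the calculus, and a matching lower bound by computing the image of $H$ under the $\stab$-interpretation. Write $r,g,b$ for the three phase gates. First I would record two facts. By the spider rule, composing a colour's $\theta=1$ gate with itself adds labels in $C_4$, so $r^{4}=g^{4}=b^{4}=\id$ and each generator has order dividing $4$. Second, the colour rotation $\identity[cr]$ introduced above is, by its three equal decompositions, a product of two of the generators, so $\identity[cr]\in H$; and the colour-change rules present conjugation by $\identity[cr]$ as a cyclic permutation of $\{r,g,b\}$. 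Since one decomposition writes $\identity[cr]$ as a product of $r$ and $g$, and $b$ is then a conjugate of a generator by $\identity[cr]$, the whole of $H$ is already generated by the two elements $r$ and $\identity[cr]$.

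With $H=\langle r,\identity[cr]\rangle$ I would invoke the von Dyck presentation of the octahedral group, $O\cong\langle s,t\mid s^{4}=t^{3}=(st)^{2}=\id\rangle$, and identify $s$ with the order-$4$ face rotation $r$ and $t$ with the order-$3$ colour rotation $\identity[cr]$. It then suffices to verify these three relations diagrammatically. The relation $s^{4}=\id$ is the order-$4$ fact above. The relation $t^{3}=\id$ says that $\identity[cr]$ is a genuine third-turn; I would obtain it from the colour-change rules together with the Frobenius and spider laws, rewriting three copies of $\identity[cr]$ to a bare wire. The last relation $(st)^{2}=\id$ is the genuinely cross-chromatic one: it asserts that the composite of the quarter-turn with the colour rotation is an involution, and proving it forces one to compute with the $\rgb$ bialgebra rules (and, if convenient, the dualizers) rather than with the cleaner single-colour Frobenius laws. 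Once all three relations hold, the presentation yields a surjection $O\twoheadrightarrow H$, hence $|H|\le 24$.

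For the matching lower bound I would pass to the interpretation functor $\rgbtostab{\cdot}:\rgb\to\stab$, which is sound by the $\rgb$-analogue of Proposition~\ref{prop:rginter}. It sends each colour's $\theta=1$ gate to the $\tfrac{\pi}{2}$-rotation about that colour's Bloch axis, and the three axes are mutually orthogonal precisely because the three observables are complementary. Restricting to $H$ gives a group homomorphism into the unitaries modulo global phase, i.e.\ into $PSU(2)\cong SO(3)$; its image is the group generated by three quarter-turns about mutually perpendicular axes, which is exactly the rotation group of the cube, of order $24$. A homomorphism cannot increase order, so $|H|\ge 24$. Combining the bounds forces $|H|=24$ and upgrades the surjection $O\twoheadrightarrow H$ to an isomorphism, which is the asserted faithful embedding of $O$ into $\hom_{\rgb}(*,*)$.

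I expect the main obstacle to be the upper bound, and within it the relation $(st)^{2}=\id$: unlike $s^{4}$ and $t^{3}$, which stay within the phase and colour-rotation structure, this one truly entangles all three colours. A secondary point requiring care is orientation and incidence: the direction of the $3$-cycle induced by $\identity[cr]$, and the pairing of a face axis with an \emph{incident} vertex axis, must be tracked so that $st$ is genuinely an edge rotation of order $2$; the cyclic colour symmetry of Rule~\eqref{eq:cyc} provides exactly the freedom needed to arrange this.
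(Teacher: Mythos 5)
Your proposal is correct, but it reaches the result by a genuinely different route from the paper. The paper also imports a presentation of $O\cong S_4$, but it keeps all three generators: it records the relations $\sigma_r^4=\sigma_g^4=\sigma_b^4=\sigma_r^2\sigma_g^2\sigma_b^2=e$ and $\sigma_g\sigma_r=\sigma_b\sigma_g=\sigma_r\sigma_b$ (the last triple being precisely the three equal decompositions of your colour rotation $\identity[cr]$, and the relation on squares coming from the heterochromatic dualizers, each of which equals a phase-$2$ gate and whose triple composite annihilates), and then exhibits explicit mutually inverse homomorphisms $f:O\to G$ and $g:G\to O$ against the Coxeter-style presentation of $S_4$, rather than reducing to your two-generator von Dyck presentation $\langle s,t\mid s^4=t^3=(st)^2=e\rangle$ with $s=\rot[$1$]{rn}$ and $t=\identity[cr]$. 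Two remarks on the comparison. First, your anticipated obstacle is illusory: you worry that $(st)^2=e$ ``truly entangles all three colours'' and needs the bialgebra rules, but once the phase-level relations above are derived diagrammatically (spider rule for the orders, the three expressions of $\identity[cr]$, dualizer annihilation), they already present $O$ --- that is exactly the content of the paper's verification that $f$ and $g$ are inverse homomorphisms --- so $t^3=e$ and $(st)^2=e$ follow by pure computation in the presented group, with no further graph rewriting; only the orientation bookkeeping you flag (choosing $\identity[cl]$ versus $\identity[cr]$, or inverting $s$) remains. Second, your lower bound is a genuine gain in completeness: the paper's phrase ``(at least) the following relations hold'' concedes that the diagrammatic group is a priori only a quotient of the presented group $G\cong O$, and since $S_4$ has proper quotients ($S_3$, $C_2$, $1$), the isomorphism $G\cong O$ by itself does not exclude further collapse in $\hom_{\rgb}(*,*)$; your order-$24$ image argument under $\rgbtostab{\cdot}$ --- three quarter-turns about mutually orthogonal Bloch axes generate the full rotation group of the cube in $SO(3)$ --- is exactly the step that upgrades ``quotient of $O$'' to the claimed faithful embedding, a step the paper leaves implicit. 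So your two-sided bounding buys a fully closed argument at the cost of some orientation care, while the paper's three-generator presentation buys symmetry in the colours and shorter diagrammatic obligations at the cost of leaving faithfulness to the reader.
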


\begin{proof}
  The group $O$ is of abstract group type $S_4$ and can be given the following standard presentation:
\begin{align*}
O \cong S_4 \cong \langle \tau_1, \tau_2, \tau_3 | &\tau_1^2 = \tau_2^2 = \tau_3^2 = e, \tau_1\tau_3 = \tau_3\tau_1,\\
&\tau_1\tau_2\tau_1 = \tau_2\tau_1\tau_2, \tau_2\tau_3\tau_2 = \tau_3\tau_2\tau_3 \rangle
\end{align*}

If we look at the subcategory generated by $\rot[1]{rn}$, $\rot[1]{gn}$, $\rot[1]{bn}$ in $\hom_\rgb(*,*)$, we can see that (at least) the following relations hold on it:

\begin{align*}
G = \langle \sigma_r, \sigma_g, \sigma_b | \sigma_r^4 = \sigma_g^4 = \sigma_b^4 = \sigma_r^2 \sigma_g^2\sigma_b^2= e,\sigma_g\sigma_r = \sigma_b\sigma_g = \sigma_r\sigma_b\rangle
\end{align*}
where $\sigma_r$, $\sigma_g$, $\sigma_b$ represent $\rot[1]{rn}$, $\rot[1]{gn}$, $\rot[1]{bn}$ respectively and the group law is given by vertical composition---where reading from up to down is the same as reading from right to left. Note that it is clear from this presentation that $G$ is a quotient of the group $C_4*C_4*C_4$. We denote the quotient map $q:C_4*C_4*C_4\to G$ and will make use of it later.

We can then define the following group homomorphisms:
\begin{equation}
  f:O\to G =
  \begin{cases}
    \tau_1 &\mapsto \sigma_b\sigma_r^2\\
    \tau_2 &\mapsto \sigma_b^2\sigma_g\\
    \tau_3 &\mapsto \sigma_g\sigma_r\sigma_g
  \end{cases}
\end{equation}
\begin{equation}
  g:G\to O =
  \begin{cases}
    \sigma_r &\mapsto \tau_1\tau_2\tau_3\\
    \sigma_g &\mapsto \tau_3\tau_1\tau_2\\
    \sigma_b &\mapsto \tau_1\tau_2\tau_3\tau_1\tau_2
  \end{cases}
\end{equation}
After checking that these are indeed group homomorphisms, we can also determine that $f$ and $g$ are inverses, and thus $G \cong O$.

\end{proof}

%

\subsection{Some Derivable equations}

The equations below can be derived from the axioms of $\rgb$ given above. These equations can often be useful when wanting to demonstrates some more complex equalities in $\rgb$

The following equation is perhaps a more convenient form of the bialgebra law.
\begin{equation}
  \begin{tikzpicture}
	\begin{pgfonlayer}{nodelayer}
		\node [style=none] (0) at (-0.5, 1.5) {};
		\node [style=none] (1) at (0.5, 1.5) {};
		\node [style=gn] (2) at (-0.5, 0.5) {};
		\node [style=gn] (3) at (0.5, 0.5) {};
		\node [style=rn] (4) at (-0.5, -0.5) {};
		\node [style=rn] (5) at (0.5, -0.5) {};
		\node [style=none] (6) at (-0.5, -1.5) {};
		\node [style=none] (7) at (0.5, -1.5) {};
	\end{pgfonlayer}
	\begin{pgfonlayer}{edgelayer}
		\draw (2) to (5);
		\draw (2) to (4);
		\draw (4) to (6.center);
		\draw (3) to (5);
		\draw (0.center) to (2);
		\draw (3) to (4);
		\draw (5) to (7.center);
		\draw (1.center) to (3);
	\end{pgfonlayer}
\end{tikzpicture}
~=
\begin{tikzpicture}
	\begin{pgfonlayer}{nodelayer}
		\node [style=none] (0) at (-0.8, 1.1) {};
		\node [style=none] (1) at (0.8, 1.1) {};
		\node [style=rn] (2) at (0, 0.5) {};
		\node [style=bn] (4) at (0, -0.5) {$1$};
		\node [style=none] (5) at (-0.8, -1.1) {};
		\node [style=none] (6) at (0.8, -1.1) {};
	\end{pgfonlayer}
	\begin{pgfonlayer}{edgelayer}
		\draw (4) to (6.center);
		\draw (4) to (5.center);
		\draw (0.center) to (2);
		\draw (1.center) to (2);
		\draw (2) to (4);
	\end{pgfonlayer}
\end{tikzpicture}
\end{equation}

The following equation is a version of the Hopf law. It shows that in fact, all the bialgebras in $\rg$ that we had defined previously are also Hopf algebras where the antipode is the identity~\cite{kassel1995quantum}.
\begin{equation}
  \begin{tikzpicture}
	\begin{pgfonlayer}{nodelayer}
		\node [style=none] (0) at (0, 1.5) {};
		\node [style=gn] (1) at (0, 0.5) {};
		\node [style=rn] (2) at (0, -0.5) {$3$};
		\node [style=none] (3) at (0, -1.5) {};
	\end{pgfonlayer}
	\begin{pgfonlayer}{edgelayer}
		\draw (2) to (3.center);
		\draw (0.center) to (1);
		\draw[bend right=45] (1) to (2);
		\draw[bend left=45] (1) to (2);
	\end{pgfonlayer}
\end{tikzpicture}
~=~
\begin{tikzpicture}
	\begin{pgfonlayer}{nodelayer}
		\node [style=none] (0) at (0, 1.5) {};
		\node [style=gn] (1) at (0, 0.5) {};
		\node [style=bn] (2) at (0, -0.5) {};
		\node [style=none] (3) at (0, -1.5) {};
	\end{pgfonlayer}
	\begin{pgfonlayer}{edgelayer}
		\draw (2) to (3.center);
		\draw (0.center) to (1);
	\end{pgfonlayer}
\end{tikzpicture}
\end{equation}

The following equation shows that colour rotation in one direction is the inverse of colour rotation in the other direction.
\begin{equation}
  \begin{tikzpicture}
	\begin{pgfonlayer}{nodelayer}
		\node [style=none] (0) at (0, 1.25) {};
		\node [style=none] (1) at (0, -0) {};
		\node [style=none] (2) at (0, -1.25) {};
	\end{pgfonlayer}
	\begin{pgfonlayer}{edgelayer}
		\draw [cl] (1.center) to (2.center);
		\draw [cr,-] (0.center) to (1.center);
	\end{pgfonlayer}
\end{tikzpicture}
~=~
\identity
~=~
  \begin{tikzpicture}
	\begin{pgfonlayer}{nodelayer}
		\node [style=none] (0) at (0, 1.25) {};
		\node [style=none] (1) at (0, -0) {};
		\node [style=none] (2) at (0, -1.25) {};
	\end{pgfonlayer}
	\begin{pgfonlayer}{edgelayer}
		\draw [cr] (1.center) to (2.center);
		\draw [cl,-] (0.center) to (1.center);
	\end{pgfonlayer}
\end{tikzpicture}
\end{equation}

Additionally, the colour rotation rules can also be used to prove the following result, akin to Equation~\ref{eq:rgcyc} for the dichromatic calculus.
\begin{equation}
  \text{All rules hold under even permutations of colours.}
  \label{eq:cyc}
\end{equation}

The following equation demonstrates a convenient way to write a node of a colour in terms of nodes of the two other colours.
\begin{equation}
  \overbrace{\underbrace{
\begin{tikzpicture}
	\begin{pgfonlayer}{nodelayer}
		\node [style=none] (0) at (-1.25, 2.5) {};
		\node [style=none] (1) at (1.25, 2.5) {};
		\node [style=rn] (2) at (-1.25, 1.25) {$3$};
		\node [style=rn] (3) at (1.25, 1.25) {$3$};
		\node [style=bn] (4) at (0, -0) {$\theta-m+n$};
		\node [style=rn] (5) at (-1.25, -1.25) {$1$};
		\node [style=rn] (6) at (1.25, -1.25) {$1$};
		\node [style=none] (7) at (-1.25, -2.5) {};
		\node [style=none] (8) at (1.25, -2.5) {};
		\node [style=none] (9) at (0, 1.25) {$\cdots$};
		\node [style=none] (10) at (0, -1.25) {$\cdots$};
	\end{pgfonlayer}
	\begin{pgfonlayer}{edgelayer}
		\draw (1.center) to (3);
		\draw (4) to (6);
		\draw (2) to (4);
		\draw (0.center) to (2);
		\draw (3) to (4);
		\draw (5) to (7.center);
		\draw (4) to (5);
		\draw (6) to (8.center);
	\end{pgfonlayer}
\end{tikzpicture}
}_n}^m
~=
  \overbrace{\underbrace{
\begin{tikzpicture}
	\begin{pgfonlayer}{nodelayer}
		\node [style=none] (0) at (-0.75, 1) {};
		\node [style=none] (1) at (0, 1) {$\cdots$};
		\node [style=none] (2) at (0.75, 1) {};
		\node [style=gn] (3) at (0, -0) {$\theta$};
		\node [style=none] (4) at (-0.75, -1) {};
		\node [style=none] (5) at (0, -1) {$\cdots$};
		\node [style=none] (6) at (0.75, -1) {};
	\end{pgfonlayer}
	\begin{pgfonlayer}{edgelayer}
		\draw (2.center) to (3);
		\draw (3) to (6.center);
		\draw (0.center) to (3);
		\draw (3) to (4.center);
	\end{pgfonlayer}
\end{tikzpicture}
}_n}^m
=~
  \overbrace{\underbrace{
\begin{tikzpicture}
	\begin{pgfonlayer}{nodelayer}
		\node [style=none] (0) at (-1.25, 2.5) {};
		\node [style=none] (1) at (1.25, 2.5) {};
		\node [style=bn] (2) at (-1.25, 1.25) {$1$};
		\node [style=bn] (3) at (1.25, 1.25) {$1$};
		\node [style=rn] (4) at (0, -0) {$\theta+m-n$};
		\node [style=bn] (5) at (-1.25, -1.25) {$3$};
		\node [style=bn] (6) at (1.25, -1.25) {$3$};
		\node [style=none] (7) at (-1.25, -2.5) {};
		\node [style=none] (8) at (1.25, -2.5) {};
		\node [style=none] (9) at (0, 1.25) {$\cdots$};
		\node [style=none] (10) at (0, -1.25) {$\cdots$};
	\end{pgfonlayer}
	\begin{pgfonlayer}{edgelayer}
		\draw (1.center) to (3);
		\draw (4) to (6);
		\draw (2) to (4);
		\draw (0.center) to (2);
		\draw (3) to (4);
		\draw (5) to (7.center);
		\draw (4) to (5);
		\draw (6) to (8.center);
	\end{pgfonlayer}
\end{tikzpicture}
}_n}^m
\end{equation}

The following equation shows how dualizers can be used to invert arrows in a digraph.
\begin{equation}
\begin{tikzpicture}
	\begin{pgfonlayer}{nodelayer}
		\node [style=none] (0) at (-1.6, 0.8) {};
		\node [style=none] (1) at (-1, 0.8) {$\cdots$};
		\node [style=none] (2) at (-0.4, 0.8) {};
		\node [style=none] (3) at (0.4, 0.8) {};
		\node [style=none] (4) at (1, 0.8) {$\cdots$};
		\node [style=none] (5) at (1.6, 0.8) {};
		\node [style=gn] (6) at (-1, -0) {};
		\node [style=rn] (7) at (1, -0) {};
		\node [style=none] (8) at (-1.6, -0.8) {};
		\node [style=none] (9) at (-1, -0.8) {$\cdots$};
		\node [style=none] (10) at (-0.4, -0.8) {};
		\node [style=none] (11) at (0.4, -0.8) {};
		\node [style=none] (12) at (1, -0.8) {$\cdots$};
		\node [style=none] (13) at (1.6, -0.8) {};
	\end{pgfonlayer}
	\begin{pgfonlayer}{edgelayer}
		\draw (7) to (13.center);
		\draw (6) to (10.center);
		\draw (0.center) to (6);
		\draw (6) to (7);
		\draw (5.center) to (7);
		\draw (7) to (11.center);
		\draw (3.center) to (7);
		\draw (6) to (8.center);
		\draw (2.center) to (6);
	\end{pgfonlayer}
\end{tikzpicture}
~=~
\begin{tikzpicture}
	\begin{pgfonlayer}{nodelayer}
		\node [style=none] (0) at (-1.6, 0.8) {};
		\node [style=none] (1) at (-1, 0.8) {$\cdots$};
		\node [style=none] (2) at (-0.4, 0.8) {};
		\node [style=none] (3) at (0.4, 0.8) {};
		\node [style=none] (4) at (1, 0.8) {$\cdots$};
		\node [style=none] (5) at (1.6, 0.8) {};
		\node [style=gn] (6) at (-1, -0) {};
		\node [style=rn] (7) at (1, -0) {};
		\node [style=none] (8) at (-1.6, -0.8) {};
		\node [style=none] (9) at (-1, -0.8) {$\cdots$};
		\node [style=none] (10) at (-0.4, -0.8) {};
		\node [style=none] (11) at (0.4, -0.8) {};
		\node [style=none] (12) at (1, -0.8) {$\cdots$};
		\node [style=none] (13) at (1.6, -0.8) {};
	\end{pgfonlayer}
	\begin{pgfonlayer}{edgelayer}
		\draw (7) to (13.center);
		\draw (6) to (10.center);
		\draw (0.center) to (6);
		\draw [ytick] (7) to (6);
		\draw (5.center) to (7);
		\draw (7) to (11.center);
		\draw (3.center) to (7);
		\draw (6) to (8.center);
		\draw (2.center) to (6);
	\end{pgfonlayer}
\end{tikzpicture}
\end{equation}

The following equation shows that two dualizers of the same colour annihilate.
\begin{equation}
\begin{tikzpicture}
	\begin{pgfonlayer}{nodelayer}
		\node [style=none] (0) at (0, 1) {};
		\node [style=none] (1) at (0, -0) {};
		\node [style=none] (2) at (0, -1) {};
        \node [style=none] at (0.5, 0) {};
        \node [style=none] at (-0.5, 0) {};
	\end{pgfonlayer}
	\begin{pgfonlayer}{edgelayer}
		\draw [ytick,-] (0.center) to (1.center);
		\draw [ytick] (1.center) to (2.center);
	\end{pgfonlayer}
\end{tikzpicture}
=~
\identity
\end{equation}

The following equation shows that three heterochromatic dualizers annihilate.
\begin{equation}
  \begin{tikzpicture}
	\begin{pgfonlayer}{nodelayer}
		\node [style=none] (0) at (0, 1.5) {};
		\node [style=none] (1) at (0, 0.5) {};
		\node [style=none] (2) at (0, -0.5) {};
		\node [style=none] (3) at (0, -1.5) {};
        \node [style=none] at (0.5, 0) {};
        \node [style=none] at (-0.5, 0) {};
	\end{pgfonlayer}
	\begin{pgfonlayer}{edgelayer}
		\draw [ytick,-] (0.center) to (1.center);
		\draw [ctick,-] (1.center) to (2.center);
		\draw [mtick] (2.center) to (3.center);
	\end{pgfonlayer}
\end{tikzpicture}
=~
\identity
~=
  \begin{tikzpicture}
	\begin{pgfonlayer}{nodelayer}
		\node [style=none] (0) at (0, 1.5) {};
		\node [style=none] (1) at (0, 0.5) {};
		\node [style=none] (2) at (0, -0.5) {};
		\node [style=none] (3) at (0, -1.5) {};
        \node [style=none] at (0.5, 0) {};
        \node [style=none] at (-0.5, 0) {};
	\end{pgfonlayer}
	\begin{pgfonlayer}{edgelayer}
		\draw [mtick,-] (0.center) to (1.center);
		\draw [ctick,-] (1.center) to (2.center);
		\draw [ytick] (2.center) to (3.center);
	\end{pgfonlayer}
\end{tikzpicture}
\end{equation}

The remaining equations are useful equations about dualizers:

\begin{gather}
  \begin{tikzpicture}
	\begin{pgfonlayer}{nodelayer}
		\node [style=gn] (0) at (0, 0.5) {};
		\node [style=none] (1) at (0, -0.5) {};
	\end{pgfonlayer}
	\begin{pgfonlayer}{edgelayer}
		\draw [ytick] (0) to (1.center);
	\end{pgfonlayer}
\end{tikzpicture}
~=~
\begin{tikzpicture}
	\begin{pgfonlayer}{nodelayer}
		\node [style=gn] (0) at (0, 0.5) {};
		\node [style=none] (1) at (0, -0.5) {};
	\end{pgfonlayer}
	\begin{pgfonlayer}{edgelayer}
		\draw (0) to (1.center);
	\end{pgfonlayer}
\end{tikzpicture}
\qqq
  \begin{tikzpicture}
	\begin{pgfonlayer}{nodelayer}
		\node [style=rn] (0) at (0, 0.5) {};
		\node [style=none] (1) at (0, -0.5) {};
	\end{pgfonlayer}
	\begin{pgfonlayer}{edgelayer}
		\draw [ytick] (0) to (1.center);
	\end{pgfonlayer}
\end{tikzpicture}
~=~
\begin{tikzpicture}
	\begin{pgfonlayer}{nodelayer}
		\node [style=rn] (0) at (0, 0.5) {$2$};
		\node [style=none] (1) at (0, -0.5) {};
	\end{pgfonlayer}
	\begin{pgfonlayer}{edgelayer}
		\draw (0) to (1.center);
	\end{pgfonlayer}
\end{tikzpicture}
\qqq
  \begin{tikzpicture}
	\begin{pgfonlayer}{nodelayer}
		\node [style=bn] (0) at (0, 0.5) {};
		\node [style=none] (1) at (0, -0.5) {};
	\end{pgfonlayer}
	\begin{pgfonlayer}{edgelayer}
		\draw [ytick] (0) to (1.center);
	\end{pgfonlayer}
\end{tikzpicture}
~=~
\begin{tikzpicture}
	\begin{pgfonlayer}{nodelayer}
		\node [style=bn] (0) at (0, 0.5) {$2$};
		\node [style=none] (1) at (0, -0.5) {};
	\end{pgfonlayer}
	\begin{pgfonlayer}{edgelayer}
		\draw (0) to (1.center);
	\end{pgfonlayer}
\end{tikzpicture}
\\
\begin{tikzpicture}
	\begin{pgfonlayer}{nodelayer}
		\node [style=none] (0) at (-0.5, 1) {};
		\node [style=none] (1) at (0, 1) {$\cdots$};
		\node [style=none] (2) at (0.5, 1) {};
		\node [style=gn] (3) at (0, -0) {};
		\node [style=none] (4) at (-0.5, -1) {};
		\node [style=none] (5) at (0, -1) {$\cdots$};
		\node [style=none] (6) at (0.5, -1) {};
	\end{pgfonlayer}
	\begin{pgfonlayer}{edgelayer}
		\draw [ytick] (2.center) to (3);
		\draw [ytick] (3) to (6.center);
		\draw [ytick] (0.center) to (3);
		\draw [ytick] (3) to (4.center);
	\end{pgfonlayer}
\end{tikzpicture}
~=~
\begin{tikzpicture}
	\begin{pgfonlayer}{nodelayer}
		\node [style=none] (0) at (-0.5, 1) {};
		\node [style=none] (1) at (0, 1) {$\cdots$};
		\node [style=none] (2) at (0.5, 1) {};
		\node [style=gn] (3) at (0, -0) {};
		\node [style=none] (4) at (-0.5, -1) {};
		\node [style=none] (5) at (0, -1) {$\cdots$};
		\node [style=none] (6) at (0.5, -1) {};
	\end{pgfonlayer}
	\begin{pgfonlayer}{edgelayer}
		\draw (2.center) to (3);
		\draw (3) to (6.center);
		\draw (0.center) to (3);
		\draw (3) to (4.center);
	\end{pgfonlayer}
\end{tikzpicture}
\qqq
    \begin{tikzpicture}
	\begin{pgfonlayer}{nodelayer}
		\node [style=none] (0) at (0, 1) {};
		\node [style=rn] (1) at (0, -0) {};
		\node [style=none] (2) at (-0.5, -1) {};
		\node [style=none] (3) at (0.5, -1) {};
	\end{pgfonlayer}
	\begin{pgfonlayer}{edgelayer}
		\draw [ytick] (1) to (2.center);
		\draw (1) to (3.center);
		\draw (0.center) to (1);
	\end{pgfonlayer}
\end{tikzpicture}
~=~
\begin{tikzpicture}
	\begin{pgfonlayer}{nodelayer}
		\node [style=none] (0) at (0, 1) {};
		\node [style=rn] (1) at (0, -0) {};
		\node [style=none] (2) at (-0.5, -1) {};
		\node [style=none] (3) at (0.5, -1) {};
	\end{pgfonlayer}
	\begin{pgfonlayer}{edgelayer}
		\draw (1) to (2.center);
		\draw [ytick] (1) to (3.center);
		\draw (0.center) to (1);
	\end{pgfonlayer}
\end{tikzpicture}
~=~
\begin{tikzpicture}
	\begin{pgfonlayer}{nodelayer}
		\node [style=none] (0) at (0, 1) {};
		\node [style=rn] (1) at (0, -0) {};
		\node [style=none] (2) at (-0.5, -1) {};
		\node [style=none] (3) at (0.5, -1) {};
	\end{pgfonlayer}
	\begin{pgfonlayer}{edgelayer}
		\draw (1) to (2.center);
		\draw (1) to (3.center);
		\draw [ytick] (0.center) to (1);
	\end{pgfonlayer}
\end{tikzpicture}
~=~
\begin{tikzpicture}
	\begin{pgfonlayer}{nodelayer}
		\node [style=none] (0) at (0, 1) {};
		\node [style=rn] (1) at (0, -0) {};
		\node [style=none] (2) at (-0.5, -1) {};
		\node [style=none] (3) at (0.5, -1) {};
	\end{pgfonlayer}
	\begin{pgfonlayer}{edgelayer}
		\draw [ytick] (1) to (2.center);
		\draw [ytick] (1) to (3.center);
		\draw [ytick] (0.center) to (1);
	\end{pgfonlayer}
\end{tikzpicture}
\qqq
\begin{tikzpicture}
	\begin{pgfonlayer}{nodelayer}
		\node [style=none] (0) at (-0.5, 1) {};
		\node [style=none] (1) at (0, 1) {$\cdots$};
		\node [style=none] (2) at (0.5, 1) {};
		\node [style=bn] (3) at (0, -0) {};
		\node [style=none] (4) at (-0.5, -1) {};
		\node [style=none] (5) at (0, -1) {$\cdots$};
		\node [style=none] (6) at (0.5, -1) {};
	\end{pgfonlayer}
	\begin{pgfonlayer}{edgelayer}
		\draw [ytick] (2.center) to (3);
		\draw [ytick] (3) to (6.center);
		\draw [ytick] (0.center) to (3);
		\draw [ytick] (3) to (4.center);
	\end{pgfonlayer}
\end{tikzpicture}
~=~
\begin{tikzpicture}
	\begin{pgfonlayer}{nodelayer}
		\node [style=none] (0) at (-0.5, 1) {};
		\node [style=none] (1) at (0, 1) {$\cdots$};
		\node [style=none] (2) at (0.5, 1) {};
		\node [style=bn] (3) at (0, -0) {};
		\node [style=none] (4) at (-0.5, -1) {};
		\node [style=none] (5) at (0, -1) {$\cdots$};
		\node [style=none] (6) at (0.5, -1) {};
	\end{pgfonlayer}
	\begin{pgfonlayer}{edgelayer}
		\draw (2.center) to (3);
		\draw (3) to (6.center);
		\draw (0.center) to (3);
		\draw (3) to (4.center);
	\end{pgfonlayer}
\end{tikzpicture}
\end{gather}

\subsection{$\dag$ Structure}
In a similar fashion as in $\rg$, the symmetric monoidal category $\rgb$ can further be made into a \dsmc{} by having $\dag$ act on the generators like so:
\[\begin{array}{c@{\qquad}c@{\qquad}c@{\qquad}c@{\qquad}c}
  \dagof{\unit{rn}}=~\counit{rn}
&
  \dagof{\rot{rn}}=~\rot[$-\theta$]{rn}
&
  \dagof{\counit{rn}}=~\unit{rn}
&
  \dagof{\mul{rn}}=\comul{rn}
&
  \dagof{\comul{rn}}=\mul{rn}
\\
  \dagof{\unit{gn}}=~\counit{gn}
&
  \dagof{\rot{gn}}=~\rot[$-\theta$]{gn}
&
  \dagof{\counit{gn}}=~\unit{gn}
&
  \dagof{\mul{gn}}=\comul{gn}
&
  \dagof{\comul{gn}}=\mul{gn}
\\
  \dagof{\unit{bn}}=~\counit{bn}
&
  \dagof{\rot{bn}}=~\rot[$-\theta$]{bn}
&
  \dagof{\counit{bn}}=~\unit{bn}
&
  \dagof{\mul{bn}}=\comul{bn} 
&
  \dagof{\comul{bn}}=\mul{bn}
\end{array}
\]
where functoriality of $\dagof{\cdot}$ is guaranteed by Rule~\eqref{eq:rgdag}

\begin{figure*} 
  \centering
  \includegraphics[scale=0.4]{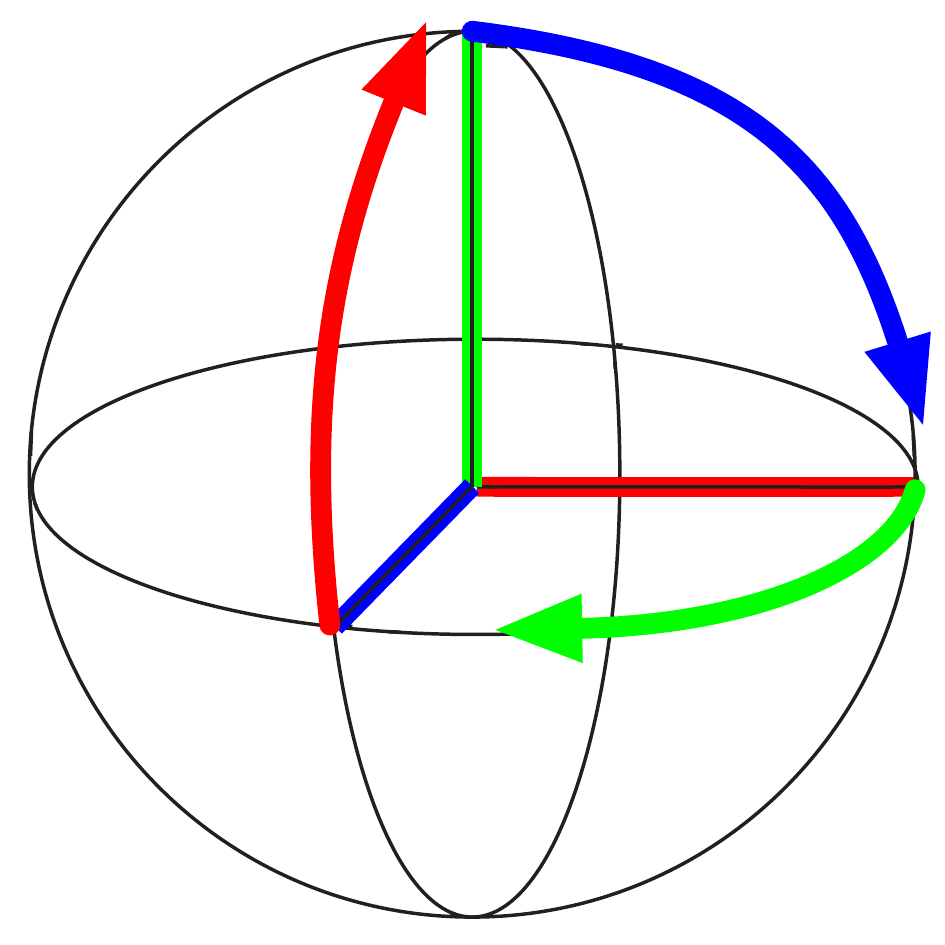}
  \caption{$\rgb$ Bloch sphere}
  \label{fig:rgbbloch}
\end{figure*}

\subsection{$\rgb$ interpretation}
In a similar manner to what we did for $\rg$, we provide an interpretation $\rgbtostab{\cdot}:\rgb\to\stab$ of the morphisms in $\rgb$

\begin{gather*}
    \rgbtostab{\unit{gn}} ~=~  \ket{+}
    \qqq
    \rgbtostab{\rot{gn}} ~=~ \ket0\bra0~+~ e^{i\theta\frac{\pi}{2}}\ket1\bra1
    \qqq
    \rgbtostab{\counit{gn}} ~=~ \bra{+}
  \\
    \rgbtostab{\comul{gn}} ~=~ \ket{00}\bra{0}~+~\ket{11}\bra{1}
    \qqq
    \rgbtostab{\mul{gn}} ~=~ \ket{0}\bra{00}~+~\ket{1}\bra{11}
\\
    \rgbtostab{\unit{rn}} ~=~ \ket{i}
  \qqq
  \rgbtostab{\rot{rn}} ~=~ \ket+\bra+ ~+~ e^{i\theta\frac{\pi}{2}}\ket-\bra-
  \qqq
    \rgbtostab{\counit{rn}} ~=~ \bra{i}
  \\
    \rgbtostab{\comul{rn}} ~=~ \ket{++}\bra{+} ~-~i\ket{--}\bra{-}
    \qqq
    \rgbtostab{\mul{rn}} ~=~ \ket{+}\bra{++}~+~i\ket{-}\bra{--}
\end{gather*}
\begin{gather*}
    \rgbtostab{\unit{bn}}~=~ \ket{0}
    \qqq
  \rgbtostab{\rot{bn}} ~=~ \ket i\bra i ~+~ e^{i\theta\frac{\pi}{2}}\ket{-i}\bra{-i}
    \qqq
    \rgbtostab{\counit{bn}} ~=~ \bra{0}
  \\
    \rgbtostab{\comul{bn}} ~=~ \ket{ii}\bra{i} ~+~\ket{-i -i}\bra{-i}
    \qqq
    \rgbtostab{\mul{bn}} ~=~ \ket{i}\bra{ii} ~+~\ket{-i}\bra{-i -i}
\end{gather*}

\begin{prop}
  $\rgbtostab{\cdot}$ is a symmetric monoidal $\dag$-functor.
\end{prop}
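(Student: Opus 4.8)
The plan is to follow the strategy of Proposition~\ref{prop:rginter}: since $\rgb$ is presented by generators and relations, producing a symmetric monoidal $\dag$-functor $\rgbtostab{\cdot}$ amounts to three checks --- (i) that the assignment on generators respects the symmetric monoidal structure, (ii) that every defining equation $f=g$ of $\rgb$ is validated, i.e.\ $\rgbtostab{f}=\rgbtostab{g}$ in $\stab$, and (iii) that $\rgbtostab{g^\dag}=\rgbtostab{g}^\dag$ for each generator $g$. Throughout, recall that the target category sits inside $\fdhilb_{wp}$, so every equality need only hold up to a nonzero global scalar; this is precisely what renders the phase factors (such as the $-i$ in $\rgbtostab{\comul{rn}}$) harmless.

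First I would observe that the images genuinely land in $\stab$: the phase gates $\rot{rn}$, $\rot{gn}$, $\rot{bn}$ with $\theta\in C_4$ are single-qubit Clifford unitaries, each comultiplication is a (possibly phase-twisted) copying map in one of the three mutually unbiased bases $\{\ket{0},\ket{1}\}$, $\{\ket{+},\ket{-}\}$, $\{\ket{i},\ket{-i}\}$ --- the eigenbases of $Z$, $X$, $Y$ --- obtained from $\delta_\stab$ by Clifford conjugation, and the units and counits are the associated (co)states. Monoidality (i) holds by construction, since $*\mapsto\mathbb{C}^2$, disjoint union $\mapsto\otimes$, and composition $\mapsto$ composition. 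The $\dag$-condition (iii) is a one-line check per generator against the $\dag$-table: for instance $\rgbtostab{\unit{gn}}^\dag=\ket{+}^\dag=\bra{+}=\rgbtostab{\counit{gn}}$ and $\rgbtostab{\rot{gn}}^\dag=\ket{0}\bra{0}+e^{-i\theta\pi/2}\ket{1}\bra{1}=\rgbtostab{\rot[$-\theta$]{gn}}$, with the red and blue cases identical after a change of basis.

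The substance is (ii), which I would verify rule by rule. The topology and spider rules reduce to the statement that each colour interprets as a $\dag$-special commutative Frobenius algebra on its basis; here the phase twist is exactly what is needed for the special law to hold, e.g.\ $\rgbtostab{\mul{rn}}\circ\rgbtostab{\comul{rn}}$ sends $\ket{-}$ to $(i)(-i)\ket{-}=\ket{-}$. The genuinely trichromatic input is the bialgebra law for $\left(\unit{bn},\mul[$3$]{rn},\comul{gn},\counit{gn}\right)$ together with the colour-rotation rules defining $\identity[cr]$ and $\identity[cl]$: one evaluates both sides on basis vectors and compares, using that $\identity[cr]$ and $\identity[cl]$ interpret as Clifford rotations cyclically permuting the three eigenbases. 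Once these base cases are settled, Rules~\eqref{eq:dag} and~\eqref{eq:cyc} deliver the remaining bialgebra quadruples, the colour-permuted rules, and all the dualizer identities with no further computation.

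I expect the main obstacle to be the disciplined bookkeeping of relative phases in the bialgebra and colour-rotation checks. Because the three observables have incompatible compact structures, the copying maps carry the $\pm i$ factors visible in $\rgbtostab{\mul{rn}}$ and $\rgbtostab{\comul{rn}}$, so a direct basis computation can appear to fail by a phase; the real content is confirming that each such discrepancy is a \emph{global} scalar, hence invisible in $\fdhilb_{wp}$. Everything beyond this is routine linear algebra in $\mathbb{C}^2$.
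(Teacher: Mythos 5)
Your proposal is correct and takes essentially the same approach as the paper, whose proof simply defers to Proposition~\ref{prop:rginter}: the routine generator-by-generator and rule-by-rule verification that $\rgbtostab{\cdot}$ respects the symmetric monoidal structure, validates every defining equation of $\rgb$ in $\stab$, and commutes with $\dag$ on generators. Your extra bookkeeping of the $\pm i$ phase twists (harmless in $\fdhilb_{wp}$) merely fleshes out the check the paper leaves implicit.
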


\begin{proof}
  See proof of Proposition~\ref{prop:rginter}.
\end{proof}

Informally, the difference between $\rg$ and $\rgb$ can be seen in Figure~\ref{fig:rgbbloch}. The rotational symmetry of the three colours can clearly be seen, and contrasted with Figure~\ref{fig:rgbloch}. For each observable, the straight segment represents the axis of rotation concretized by the $\rot{gn}$ morphism---it is also the line of values which are fixed by that morphism. The arrow shows the direction of positive rotation, and the arrow-tail is where the deleting point $\unit{gn}$ lies. 

One can notice that the red deleting point in Figure~\ref{fig:rgbbloch} is at a different location than the red deleting point in Figure~\ref{fig:rgbloch}. It is for this reason that---although the green generators of $\rg$ and $\rgb$ are interpreted as the same values in $\stab$---the red generators are mapped to different values. Essentially, this design choice was made so that Rule~\eqref{eq:cyc}, and the rules involving the colour changers can hold.

\section{RG to RGB translation}

We define a translation from dichromatic diagrams to trichromatic diagrams as a functor $\mathcal{T}:\rg\to\rgb$ by first defining $\mathcal{T}$ by its value on the generators of $\rg$ and then checking that equal diagrams of $\rg$ are equal under translation.
\[\begin{array}{@{}c@{~\quad}c@{~\quad}c@{~\quad}c@{~\quad}c@{}}
  \rgtorgbof{\counit{go}}=~\counit{gn}
& \rgtorgbof{\rot{go}}=~\rot{gn}
& \rgtorgbof{\unit{go}}=~\unit{gn}
&
  \rgtorgbof{\comul{go}}=\comul{gn}
&
  \rgtorgbof{\mul{go}}=\mul{gn}
\\
  \rgtorgbof{\counit{ro}}=~\counit[$3$]{rn}
&
\rgtorgbof{\rot{ro}}=~\rot{rn}
&
  \rgtorgbof{\unit{ro}}=~\unit[$1$]{rn}
&
  \rgtorgbof{\comul{ro}}=\comul[$1$]{rn}
&
  \rgtorgbof{\mul{ro}}=\mul[$3$]{rn}
\end{array}
\]

\begin{prop}
  $\rgtorgb$ is a functor.
\end{prop}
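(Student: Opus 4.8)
The plan is to exploit that $\rg$ is presented as a symmetric monoidal $\dag$-category by the generators and relations of Section~3. By the universal property of such a presentation, giving a strict monoidal $\dag$-functor $\rgtorgb\colon\rg\to\rgb$ is the same as giving, for each generator, a morphism of $\rgb$ of the same arity such that (i) the assignment commutes with $\dag$ on the generators and (ii) the image of every defining equation of $\rg$ is a derivable equation of $\rgb$; preservation of identities, composition, the monoidal product and the symmetry is then forced and automatic. So the whole proof reduces to checks (i) and (ii). First I would note that each entry of the defining table has the correct number of input and output wires, so that $\rgtorgb$ is at least well-typed.

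For (i), the green generators go to their green namesakes and $\dag$-compatibility is immediate from the two $\dag$-tables. For the red generators the inserted phases are $1$ on $\unit{}$ and $\comul{}$ and $3$ on $\mul{}$ and $\counit{}$; since $-1\equiv 3\pmod 4$ and $\dag$ negates angles, one verifies, for instance, $\dagof{\rgtorgbof{\mul{ro}}}=\dagof{\mul[$3$]{rn}}=\comul[$1$]{rn}=\rgtorgbof{\comul{ro}}=\rgtorgbof{\dagof{\mul{ro}}}$, and likewise for the unit/counit pair, so (i) holds. For (ii) the purely green relations (green spider fusion, the Frobenius and phase laws) transport verbatim. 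The first genuinely nontrivial point is that the twisted red quadruple $\bigl(\unit[$1$]{rn},\mul[$3$]{rn},\comul[$1$]{rn},\counit[$3$]{rn}\bigr)$ is still a $\dag$-special commutative Frobenius algebra in $\rgb$. This follows from red spider fusion (available via Rule~\eqref{eq:cyc}): merging the inserted red phases adds them modulo $4$, and they cancel ($3+1\equiv 0$), so the special law, the unit laws and the Frobenius law each collapse to their un-twisted $\rgb$ counterparts.

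The hard part will be the relations that couple the two colours: the red/green bialgebra, the copying laws, and above all the coincidence of compact structures in $\rg$ (the green and red cup states are equal). These cannot hold colour-by-colour, precisely because red and green have incompatible compact structures in $\rgb$; the phases in the table are engineered exactly to repair this. I would discharge them by rewriting the twisted-red images inside $\rgb$ using the derived colour-rotation identities, the rule expressing a node of one colour through the other two, and the Hopf law, while carefully tracking the accumulated angles. For the compact-structure coincidence the crux is that $\rgtorgbof{\text{red cup}}=\comul[$1$]{rn}\circ\unit[$1$]{rn}$ fuses to a red $0$-to-$2$ node of total phase $1+1=2$, and this phase $2$ is exactly what rewrites the red Bell state into the green one. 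This phase bookkeeping, together with fixing the image $\rgtorgbof{\hd}$ of the Hadamard as the appropriate colour-changing composite and checking its two relations ($\rgtorgbof{\hd}\circ\rgtorgbof{\hd}=\id$ and the green/red conjugation law), forms the core difficulty, and is where the Euler-decomposition content of the paper enters.

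Throughout, a useful aid for the bookkeeping is to push both sides of each relation through $\rgbtostab{\cdot}$: since $\rgbtostab{\rgtorgbof{g}}=\rgtostab{g}$ already holds on the generators, agreement of $\rgbtostab{\rgtorgbof{f}}$ with $\rgtostab{f}$ gives a quick consistency test for each equation. This is only a guide, however, and not a substitute for a derivation, since $\rgbtostab{\cdot}$ is not faithful; every relation must finally be proved from the equational axioms of $\rgb$ themselves.
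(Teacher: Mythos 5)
Your proposal is correct and takes essentially the same route as the paper, whose entire proof is the one-line remark that it suffices to check that every defining rule $f=g$ of $\rg$ yields a derivable equation $\rgtorgbof{f}=\rgtorgbof{g}$ in $\rgb$ --- precisely your reduction via the presentation, with your items (i) and (ii) spelling out the ``routine check'' the paper leaves implicit. Your phase bookkeeping (the $3+1\equiv 0$ cancellations, the phase-$2$ red cup, and $\dag$-compatibility via $-3\equiv 1 \pmod 4$) matches the paper's translation table, and you even correctly flag the one point the paper glosses over, namely that an image of $\hd$ (the Euler composite of phase gates) must be fixed and its two relations verified.
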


\begin{proof}
  It suffices to do a routine check that for all rules $f=g$ in $\rg$, we can prove $\rgtorgb{f}=\rgtorgb{g}$ in $\rgb$.
\end{proof}

\subsection{Translation preserves interpretation}
Finally, the crucial property of this translation is the following

\begin{prop}
  The following diagram commutes
  \begin{equation}
      \xymatrix{
  \rg
\ar[rr]^{\mathcal{T}}
\ar[dr]_{\rgtostab{\cdot}}
&&\rgb
\ar[dl]^{\rgbtostab{\cdot}}\\
&\stab}
  \end{equation}
\end{prop}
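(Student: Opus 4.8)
The plan is to exploit the fact that all three legs of the triangle are functors that preserve the symmetric monoidal structure, so that checking commutativity reduces to a finite check on the generators of $\rg$. Concretely, by Proposition~\ref{prop:rginter} the map $\rgtostab{\cdot}$ is a symmetric monoidal $\dag$-functor, by the preceding proposition so is $\rgbtostab{\cdot}$, and $\rgtorgb$ has just been shown to be a functor; hence the composite $\rgbtostab{\rgtorgbof{\cdot}}$ is again a symmetric monoidal functor $\rg\to\stab$. Both $\rgbtostab{\rgtorgbof{\cdot}}$ and $\rgtostab{\cdot}$ send the generating object $*$ to $\mathbb{C}^2$ and preserve identities, composition, the monoidal product and the symmetry, so they already agree on objects. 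Since every morphism of $\rg$ is built from the generators by composition and tensoring, it therefore suffices to prove that the two functors agree on each generator.

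First I would dispose of the green fragment and the wire. On green generators $\rgtorgb$ acts as the identity ($\unit{go}\mapsto\unit{gn}$, $\mul{go}\mapsto\mul{gn}$, and so on), and by construction the $\stab$-interpretations of the green generators coincide in $\rg$ and $\rgb$; hence $\rgbtostab{\rgtorgbof{g}}=\rgtostab{g}$ holds on the nose for every green generator and for $\identity$. The substance of the argument is the red fragment, where $\rgtorgb$ inserts the phase labels $1$ and $3$.

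For the red rotation the match is immediate, since $\rgtorgbof{\rot{ro}}=\rot{rn}$ and $\rgbtostab{\rot{rn}}=\ket{+}\bra{+}+e^{i\theta\frac{\pi}{2}}\ket{-}\bra{-}=\rgtostab{\rot{ro}}$. For the multiplication, the phase built into the (twisted) $\rgb$ red Frobenius algebra, $\ket{+}\bra{++}+i\ket{-}\bra{--}$, is cancelled exactly by the inserted label $3$, whose rotation $\ket{+}\bra{+}-i\ket{-}\bra{-}$ multiplies it to $\ket{+}\bra{++}+\ket{-}\bra{--}=\rgtostab{\mul{ro}}$. For the red unit the same cancellation leaves only a harmless global phase: $(\ket{+}\bra{+}+i\ket{-}\bra{-})\ket{i}=e^{i\frac{\pi}{4}}\ket{0}$, which equals $\rgtostab{\unit{ro}}=\ket{0}$ in $\fdhilb_{wp}$. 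The red counit and comultiplication are the $\dag$-images of the unit and multiplication, so, both functors respecting $\dag$, these two cases follow for free.

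It remains to treat the colourless Hadamard: although the displayed table lists only the coloured generators, $\rgtorgb$ must also assign $\hd$ a value in $\rgb$ (its red--green decomposition), and one checks that $\rgbtostab{\rgtorgbof{\hd}}=\ket{+}\bra{0}+\ket{-}\bra{1}=\rgtostab{\hd}$, again up to a global scalar. Matching every generator, the two functors coincide and the triangle commutes. The one genuinely delicate point---and the reason the translation is nontrivial at all---is the phase bookkeeping on the red fragment: the red observable is interpreted differently in $\rg$ and in $\rgb$ (its deleting point sits at a different spot on the Bloch sphere), so the labels $1$ and $3$ must be chosen precisely to cancel the twist carried by the $\rgb$ red algebra, some cancellations being exact and others only up to a nonzero global scalar. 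This is exactly why the target must be $\fdhilb_{wp}$ rather than $\fdhilb$.
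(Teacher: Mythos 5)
Your proof is correct and takes essentially the same route as the paper's: the paper's own proof is exactly the one-line remark that it suffices to check $\rgbtostab{\rgtorgbof{\varphi}}=\rgtostab{\varphi}$ for each generator $\varphi$ of $\rg$, which you carry out in full, with the right phase bookkeeping on the red fragment (the inserted label $3$ cancelling the twist of the red multiplication exactly, and the unit/counit cases holding up to the global scalars $e^{\pm i\pi/4}$, which is harmless since $\stab$ lives inside $\fdhilb_{wp}$). Your handling of $\hd$---noting that the translation table omits it, so $\rgtorgbof{\hd}$ must be taken to be the Euler-style composite of green and red $1$-phases, whose interpretation is $\ket{+}\bra{0}+\ket{-}\bra{1}$ up to a nonzero scalar---in fact patches a small omission in the paper's own definition of $\rgtorgb$.
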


\begin{proof}
  This follows from a routine check that for each generator $\varphi:*^m\to*^n$ in $\rg$, $\rgtorgb{\rgbtostab{\varphi}}=\rgtostab{\varphi}$
\end{proof}

\begin{prop}
  \label{prop:supp}
  The supplementarity rule that we had mentioned earlier is provable if we take its translation into $\rgb$. That is,
  \begin{equation}
    \rgtorgbof{
\begin{tikzpicture}
	\begin{pgfonlayer}{nodelayer}
		\node [style=none] (0) at (0, 1.6) {};
		\node [style=go] (1) at (0, 0.6) {};
		\node [style=ro] (2) at (-0.8, -0) {$1$};
		\node [style=ro] (3) at (0.8, -0) {$1$};
		\node [style=go] (4) at (0, -0.6) {};
		\node [style=none] (5) at (0, -1.6) {};
	\end{pgfonlayer}
	\begin{pgfonlayer}{edgelayer}
		\draw (2) to (4);
		\draw (1) to (2);
		\draw (1) to (3);
		\draw (3) to (4);
		\draw (4) to (5.center);
		\draw (0.center) to (1);
	\end{pgfonlayer}
\end{tikzpicture}}
~=~
\rgtorgbof{\begin{tikzpicture}
	\begin{pgfonlayer}{nodelayer}
		\node [style=none] (0) at (0, 1.5) {};
		\node [style=go] (1) at (0, 0.5) {$2$};
		\node [style=go] (2) at (0, -0.5) {$2$};
		\node [style=none] (3) at (0, -1.5) {};
	\end{pgfonlayer}
	\begin{pgfonlayer}{edgelayer}
		\draw (0.center) to (1);
		\draw (2) to (3.center);
	\end{pgfonlayer}
\end{tikzpicture}}
  \end{equation}
\end{prop}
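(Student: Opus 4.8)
The plan is to first discharge the functor $\rgtorgb$ by direct computation, and then to reduce the claim to a purely $\rgb$-internal diagram rewrite. Since $\rgtorgb$ fixes every green generator and sends $\rot{ro}$ to $\rot{rn}$, applying it to the right-hand side leaves the two disconnected single-legged green nodes untouched, so $\rgtorgbof{\text{RHS}}$ is the green-$2$ costate tensored with the green-$2$ state; applying it to the left-hand side leaves the green comultiplication, the two $\rot[1]{rn}$ gates and the green multiplication unchanged. Thus it suffices to prove in $\rgb$ the single equation
\[
\mul{gn}\circ\left(\rot[1]{rn}\otimes\rot[1]{rn}\right)\circ\comul{gn}
~=~
\counit[2]{gn}\otimes\unit[2]{gn},
\]
i.e. that the green ``bubble'' decorated with two red quarter-turns disconnects into a green-$2$ costate and a green-$2$ state.

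The engine of the argument is the expression of a rotation as a multiplication with a unit plugged in. The rule immediately preceding the colour-changer definitions states that a green spider with a red unit attached to one leg equals $\rot[1]{gn}$; feeding this through the cyclic colour symmetry of Rule~\eqref{eq:cyc} (the $3$-cycle $g\mapsto r\mapsto b\mapsto g$) yields the companion identity that $\rot[1]{rn}$ equals a red multiplication with a blue unit plugged into one input. I would substitute this for each of the two $\rot[1]{rn}$ gates, turning the left-hand side into a green comultiplication whose two output legs feed two red multiplications, each carrying a dangling blue unit, which in turn feed the green multiplication.

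From here the idea is to collapse the green/red interface with the bialgebra laws. The green comultiplication meeting the two red multiplications is exactly the shape governed by the green--red bialgebra (after converting the plain red multiplications to the phase-adjusted ones via $\rot[3]{rn}\circ\mul{rn}=\mul[3]{rn}$ and the spider rule), so the crossing rewrites into a single red node together with a green structure acting on the two blue units; spider fusion then merges the green multiplication at the bottom with this green structure. The two blue units, now copied and re-merged by a common green (co)multiplication, should recombine---via the Hopf-law equation (a green bubble closed by a red-$3$ multiplication disconnects into a green costate and a blue unit) together with the colour-change and dualizer identities---into a disconnected pair, which I would finally rewrite into the green-$2$ costate and green-$2$ state.

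The main obstacle is this last stage: choosing the precise order of bialgebra, Hopf, and dualizer rewrites so that the two auxiliary blue (Y-axis) contributions recombine into exactly a green costate and a green state, and then bookkeeping the accumulated $\pm 1,\pm 3$ phases (from the red-to-blue-unit substitution and from absorbing the intrinsic red phases into the bialgebra) to verify that the net green phase is exactly $2$ on each leg. Working in $\fdhilb_{wp}$ means global scalars may be discarded, which removes one layer of bookkeeping, but the \emph{relative} phase distinguishing green-$2$ from green-$0$ must be tracked honestly. This is precisely the step with no counterpart in $\rg$: it is the genuine use of the third (blue) colour, and it is why supplementarity is underivable dichromatically yet becomes a theorem once the trichromatic structure---equivalently, the Euler decomposition of the Hadamard gate---is present.
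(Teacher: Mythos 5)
Your opening reduction is correct and is exactly how the paper's proof begins: since $\rgtorgb$ fixes every green generator and sends $\rot{ro}$ to $\rot{rn}$, the proposition does come down to the single $\rgb$-internal identity
\begin{equation*}
\mul{gn}\circ\left(\rot[$1$]{rn}\otimes\rot[$1$]{rn}\right)\circ\comul{gn}
~=~
\unit[$2$]{gn}\circ\counit[$2$]{gn},
\end{equation*}
and your toolbox (rotation-as-multiplication via the complementary colour's unit, bialgebra, Hopf, dualizers) is the right one. But the engine of the proof is missing, and you say so yourself: the ``last stage'' you defer is not phase bookkeeping, it is the entire content of the proposition. Moreover, the one concrete move you do commit to does not fit. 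The green--red bialgebra redex in $\rgb$ is a red multiplication \emph{feeding} a green comultiplication (one node of each, which then commutes past at the cost of doubling with a crossing); your diagram instead has a single green comultiplication feeding two parallel red multiplications, which is not an instance of the law. The natural repair---manufacture the missing second comultiplication by rewriting your two blue units as $\comul{gn}\circ\unit{bn}$ (a bialgebra axiom for the quadruple with unit $\unit{bn}$ and multiplication $\mul[$3$]{rn}$), then apply the bialgebra law in reverse---is provably circular: writing $\mul{rn}=\rot[$1$]{rn}\circ\mul[$3$]{rn}$, the reversed law reassembles $\comul{gn}\circ\mul[$3$]{rn}$ precomposed with a blue unit, and since $\mul[$3$]{rn}$ with a blue unit plugged into one input equals $\rot[$3$]{rn}\circ\rot[$1$]{rn}=\identity$, the manoeuvre returns exactly the diagram you started from.

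The paper's actual proof is a specific twelve-step rewrite chain with no analogue of this shortcut, and it runs in a rather different direction. After translating, it inserts yellow dualizers on two legs of the bottom green node and trades them for a single dualizer on the output wire; it then recolours the top cluster using the derived identity that writes a green spider as a blue node of phase $\theta-m+n$ flanked by red phases; it absorbs the resulting red points into green phases via the rotation-as-multiplication rule used in the direction \emph{opposite} to yours (eliminating units rather than introducing them), leaving a green-$2$ point hanging off a red phase on the main wire; finally it converts this configuration into cyan dualizers, slides them through the red spider so that the diagram falls apart into the green-$2$ effect, the green-$2$ state, and a closed scalar bubble, which is discarded by the paper's convention of ignoring components attached to neither input nor output. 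Note that the relative phase $2$ you rightly worry about is produced automatically by the dualizer laws (the cyan dualizer adds phase $2$ to a green point), not by manual tracking. In short: right reduction, right ingredients, but the derivation itself---the only nontrivial part---is absent, and the bialgebra step you sketch in its place cannot do the work.
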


\begin{proof}
\begin{equation}
  \begin{split}
\rgtorgbof{\begin{tikzpicture}
	\begin{pgfonlayer}{nodelayer}
		\node [style=none] (0) at (0, 1.6) {};
		\node [style=go] (1) at (0, 0.6) {};
		\node [style=ro] (2) at (-0.8, -0) {$1$};
		\node [style=ro] (3) at (0.8, -0) {$1$};
		\node [style=go] (4) at (0, -0.6) {};
		\node [style=none] (5) at (0, -1.6) {};
	\end{pgfonlayer}
	\begin{pgfonlayer}{edgelayer}
		\draw (2) to (4);
		\draw (1) to (2);
		\draw (1) to (3);
		\draw (3) to (4);
		\draw (4) to (5.center);
		\draw (0.center) to (1);
	\end{pgfonlayer}
\end{tikzpicture}}
~=~
\begin{tikzpicture}
	\begin{pgfonlayer}{nodelayer}
		\node [style=none] (0) at (0, 1.6) {};
		\node [style=gn] (1) at (0, 0.6) {};
		\node [style=rn] (2) at (-0.8, -0) {$1$};
		\node [style=rn] (3) at (0.8, -0) {$1$};
		\node [style=gn] (4) at (0, -0.6) {};
		\node [style=none] (5) at (0, -1.6) {};
	\end{pgfonlayer}
	\begin{pgfonlayer}{edgelayer}
		\draw (2) to (4);
		\draw (1) to (2);
		\draw (1) to (3);
		\draw (3) to (4);
		\draw (4) to (5.center);
		\draw (0.center) to (1);
	\end{pgfonlayer}
\end{tikzpicture}
~=~
\begin{tikzpicture}
	\begin{pgfonlayer}{nodelayer}
		\node [style=none] (0) at (0, 1.6) {};
		\node [style=gn] (1) at (0, 0.6) {};
		\node [style=rn] (2) at (-0.8, -0) {$1$};
		\node [style=rn] (3) at (0.8, -0) {$1$};
		\node [style=gn] (4) at (0, -0.6) {};
		\node [style=none] (5) at (0, -1.6) {};
	\end{pgfonlayer}
	\begin{pgfonlayer}{edgelayer}
		\draw [ytick] (4) to (2);
		\draw (1) to (2);
		\draw (1) to (3);
		\draw [ytick] (4) to (3);
		\draw (4) to (5.center);
		\draw (0.center) to (1);
	\end{pgfonlayer}
\end{tikzpicture}
~=~
\begin{tikzpicture}
	\begin{pgfonlayer}{nodelayer}
		\node [style=none] (0) at (0, 1.6) {};
		\node [style=gn] (1) at (0, 0.6) {};
		\node [style=rn] (2) at (-0.8, -0) {$1$};
		\node [style=rn] (3) at (0.8, -0) {$1$};
		\node [style=gn] (4) at (0, -0.6) {};
		\node [style=none] (5) at (0, -1.6) {};
	\end{pgfonlayer}
	\begin{pgfonlayer}{edgelayer}
		\draw (4) to (2);
		\draw (1) to (2);
		\draw (1) to (3);
		\draw (4) to (3);
		\draw [ytick] (4) to (5.center);
		\draw (0.center) to (1);
	\end{pgfonlayer}
\end{tikzpicture}
~=~
\begin{tikzpicture}
	\begin{pgfonlayer}{nodelayer}
		\node [style=none] (0) at (-0.5, 2) {};
		\node [style=rn] (1) at (0.5, 1) {$1$};
		\node [style=rn] (2) at (-0.5, 0) {};
		\node [style=bn] (3) at (0.5, 0) {$1$};
		\node [style=gn] (4) at (-0.5, -1) {};
		\node [style=rn] (5) at (0.5, -1) {$1$};
		\node [style=none] (6) at (-0.5, -2) {};
	\end{pgfonlayer}
	\begin{pgfonlayer}{edgelayer}
		\draw (3) to (5);
		\draw (3) to (1);
		\draw (2) to (3);
		\draw [ytick] (4) to (6.center);
		\draw (0.center) to (2);
		\draw (4) to (2);
	\end{pgfonlayer}
\end{tikzpicture}
~=~
\begin{tikzpicture}
	\begin{pgfonlayer}{nodelayer}
		\node [style=none] (0) at (-0.5, 2) {};
		\node [style=rn] (1) at (0.5, 1) {$1$};
		\node [style=rn] (2) at (-0.5, 0) {};
		\node [style=bn] (3) at (0.5, 0) {$1$};
		\node [style=gn] (4) at (-0.5, -1) {};
		\node [style=rn] (5) at (0.5, -1) {$1$};
		\node [style=none] (6) at (-0.5, -2) {};
	\end{pgfonlayer}
	\begin{pgfonlayer}{edgelayer}
		\draw (3) to (5);
		\draw (3) to (1);
		\draw (2) to (3);
		\draw (4) to (6.center);
		\draw (0.center) to (2);
		\draw [ytick] (4) to (2);
	\end{pgfonlayer}
\end{tikzpicture}
~=~
\begin{tikzpicture}
	\begin{pgfonlayer}{nodelayer}
		\node [style=none] (0) at (-0.5, 2) {};
		\node [style=rn] (1) at (0.5, 1) {$1$};
		\node [style=rn] (2) at (-0.5, 0) {};
		\node [style=bn] (3) at (0.5, 0) {$1$};
		\node [style=rn] (5) at (0.5, -1) {$1$};
		\node [style=none] (6) at (-0.5, -2) {};
	\end{pgfonlayer}
	\begin{pgfonlayer}{edgelayer}
		\draw (3) to (5);
		\draw (3) to (1);
		\draw (2) to (3);
		\draw (2) to (6.center);
		\draw (0.center) to (2);
	\end{pgfonlayer}
\end{tikzpicture}
\\~=~
\begin{tikzpicture}
	\begin{pgfonlayer}{nodelayer}
		\node [style=none] (0) at (-0.5, 2) {};
		\node [style=rn] (1) at (0.5, 1) {};
		\node [style=rn] (2) at (-0.5, 0) {$1$};
		\node [style=gn] (3) at (0.5, 0) {};
		\node [style=rn] (5) at (0.5, -1) {};
		\node [style=none] (6) at (-0.5, -2) {};
	\end{pgfonlayer}
	\begin{pgfonlayer}{edgelayer}
		\draw (3) to (5);
		\draw (3) to (1);
		\draw (2) to (3);
		\draw (2) to (6.center);
		\draw (0.center) to (2);
	\end{pgfonlayer}
\end{tikzpicture}
~=~
\begin{tikzpicture}
	\begin{pgfonlayer}{nodelayer}
		\node [style=none] (0) at (-0.5, 1) {};
		\node [style=rn] (2) at (-0.5, 0) {$1$};
		\node [style=gn] (3) at (0.5, 0) {$2$};
		\node [style=none] (6) at (-0.5, -1) {};
	\end{pgfonlayer}
	\begin{pgfonlayer}{edgelayer}
		\draw (2) to (3);
		\draw (2) to (6.center);
		\draw (0.center) to (2);
	\end{pgfonlayer}
\end{tikzpicture}
~=~
\begin{tikzpicture}
	\begin{pgfonlayer}{nodelayer}
		\node [style=none] (0) at (-0, 1) {};
		\node [style=rn] (2) at (-0, 0) {};
		\node [style=gn] (3) at (1, 0) {};
        \node [style=bn] (4) at (-1, 0) {};
		\node [style=none] (6) at (-0, -1) {};
	\end{pgfonlayer}
	\begin{pgfonlayer}{edgelayer}
		\draw [ctick] (2) to (3);
		\draw (2) to (6.center);
		\draw (0.center) to (2);
        \draw (4) to (2);
	\end{pgfonlayer}
\end{tikzpicture}
~=~
\begin{tikzpicture}
	\begin{pgfonlayer}{nodelayer}
		\node [style=none] (0) at (-0, 1) {};
		\node [style=rn] (2) at (-0, 0) {};
		\node [style=gn] (3) at (1, 0) {};
        \node [style=bn] (4) at (-1, 0) {};
		\node [style=none] (6) at (-0, -1) {};
	\end{pgfonlayer}
	\begin{pgfonlayer}{edgelayer}
		\draw (2) to (3);
		\draw [ctick] (2) to (6.center);
		\draw [ctick] (0.center) to (2);
        \draw [ctick] (4) to (2);
	\end{pgfonlayer}
\end{tikzpicture}
~=~
\begin{tikzpicture}
	\begin{pgfonlayer}{nodelayer}
		\node [style=none] (0) at (1, 2) {};
		\node [style=gn] (2) at (-0, 0) {};
		\node [style=gn] (3) at (1, 1) {};
        \node [style=bn] (4) at (-1, 0) {};
        \node [style=gn] (5) at (1, -1) {};
		\node [style=none] (6) at (1, -2) {};
	\end{pgfonlayer}
	\begin{pgfonlayer}{edgelayer}
		\draw [ctick] (5) to (6.center);
		\draw [ctick] (0.center) to (3);
        \draw [ctick] (4) to (2);
	\end{pgfonlayer}
\end{tikzpicture}
~=~
\begin{tikzpicture}
	\begin{pgfonlayer}{nodelayer}
		\node [style=none] (0) at (0, 1.5) {};
		\node [style=gn] (1) at (0, 0.5) {$2$};
		\node [style=gn] (2) at (0, -0.5) {$2$};
		\node [style=none] (3) at (0, -1.5) {};
	\end{pgfonlayer}
	\begin{pgfonlayer}{edgelayer}
		\draw (0.center) to (1);
		\draw (2) to (3.center);
	\end{pgfonlayer}
\end{tikzpicture}
~=~\rgtorgbof{\begin{tikzpicture}
	\begin{pgfonlayer}{nodelayer}
		\node [style=none] (0) at (0, 1.5) {};
		\node [style=go] (1) at (0, 0.5) {$2$};
		\node [style=go] (2) at (0, -0.5) {$2$};
		\node [style=none] (3) at (0, -1.5) {};
	\end{pgfonlayer}
	\begin{pgfonlayer}{edgelayer}
		\draw (0.center) to (1);
		\draw (2) to (3.center);
	\end{pgfonlayer}
\end{tikzpicture}}
\end{split}
\end{equation}
\end{proof}

\section{Euler Decomposition}

Let $E$ be the relation on morphisms of $\rg$ defined by the following:
\begin{equation}
\hd
~~\stackrel{E}{\equiv}~~
  \begin{tikzpicture}
	\begin{pgfonlayer}{nodelayer}
		\node [style=none] (0) at (0, 2) {};
		\node [style=go] (1) at (0, 1) {$1$};
		\node [style=ro] (2) at (0, -0) {$1$};
		\node [style=go] (3) at (0, -1) {$1$};
		\node [style=none] (4) at (0, -2) {};
	\end{pgfonlayer}
	\begin{pgfonlayer}{edgelayer}
		\draw (0.center) to (1);
		\draw (2) to (3);
		\draw (3) to (4.center);
		\draw (1) to (2);
	\end{pgfonlayer}
\end{tikzpicture}
\end{equation}

This is known as an Euler decomposition of the Hadamard gate. Duncan and Perdrix~\cite{duncan2009graph} have shown that this equality is not provable in $\rg$.

From this relation, we can produce the quotient category $\rgp:=\rg/E$ with quotient functor $\mathcal{Q}:\rg\to\rgp$. Duncan and Perdrix~\cite{duncan2009graph} showed that Van den Nest's theorem~\cite{van2004graphical} is equivalent to the addition of $E$. This means that Van den Nest's theorem is not provable in $\rg$ but is provable in $\rgp$.

\begin{defn}[chromatic diagram category]
  We will use the term chromatic diagram category---usually denoted by $\cat D$---to mean any category of $\rg$, $\rgb$ or $\rgp$
\end{defn}

\begin{lem}
  For every pair $f\stackrel{E}{\equiv} g$, we have $\rgtorgb{f}=\rgtorgb{g}$ and $\rgtostab{f}=\rgtostab{g}$.
  \label{lem:rgp}
\end{lem}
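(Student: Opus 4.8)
The plan is to exploit that $\rgtorgb$ and $\rgtostab{\cdot}$ are both functors preserving composition and the monoidal product, so each automatically respects any congruence it respects on a generating relation. Here $\stackrel{E}{\equiv}$ is the congruence on $\rg$ generated (under $\circ$ and $\otimes$) by the single pair relating $\hd$ to the Euler composite $\rot[$1$]{go}\circ\rot[$1$]{ro}\circ\rot[$1$]{go}$, and $\rgp=\rg/E$ is the associated quotient. First I would record the standard reduction: a single rewrite in this congruence writes $f=C[\hd]$ and $g=C[\,\rot[$1$]{go}\circ\rot[$1$]{ro}\circ\rot[$1$]{go}\,]$ for a one-hole context $C$ built from $\circ$ and $\otimes$, so for any such functor $\Phi$ one has $\Phi(f)=\widetilde{C}[\Phi(\hd)]$ and $\Phi(g)=\widetilde{C}[\Phi(\mathrm{RHS})]$ with $\widetilde C$ the image context; hence $\Phi(\hd)=\Phi(\mathrm{RHS})$ already forces $\Phi(f)=\Phi(g)$. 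Iterating over rewrites and using transitivity, it suffices to verify the claim on the single generating pair, once with $\Phi=\rgtorgb$ and once with $\Phi=\rgtostab{\cdot}$.

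For $\rgtorgb$, the translation table sends the green and red phase gates to $\rot[$1$]{gn}$ and $\rot[$1$]{rn}$, so the right-hand side of $E$ is carried to the composite $\rot[$1$]{gn}\circ\rot[$1$]{rn}\circ\rot[$1$]{gn}$. Since $\rgb$ has no Hadamard generator, this composite is exactly the value assigned to $\hd$ by $\rgtorgb$, and thus $\rgtorgbof{\hd}=\rgtorgbof{\mathrm{RHS}}$ holds on the nose. (If one instead prefers to fix $\rgtorgbof{\hd}$ independently, say as a dualizer, the equation becomes a short finite rewrite using the colour-changer rules of $\rgb$.)

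For $\rgtostab{\cdot}$ I would simply compute both images in $\stab$. We are given $\rgtostab{\hd}=\ket{+}\bra{0}+\ket{-}\bra{1}$, i.e.\ the Hadamard matrix $H$. The other image is the product of the three phase-gate interpretations $\rgtostab{\rot[$1$]{go}}=\ket{0}\bra{0}+i\ket{1}\bra{1}$ and $\rgtostab{\rot[$1$]{ro}}=\ket{+}\bra{+}+i\ket{-}\bra{-}$, composed in the order dictated by the diagram; a direct calculation gives $e^{i\pi/4}H$. Hence the two sides agree up to the global phase $e^{i\pi/4}$, and since $\stab$ is a subcategory of $\fdhilb_{wp}$, in which morphisms are identified up to a nonzero scalar, this phase is invisible and $\rgtostab{\hd}=\rgtostab{\mathrm{RHS}}$ in $\stab$.

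I do not expect a serious obstacle; the only delicate point is this last one. The Euler identity holds only up to a scalar, so the semantic half of the lemma genuinely depends on working in $\fdhilb_{wp}$ rather than in strict $\fdhilb$ — and this is precisely the content that makes the relation $E$ valid in $\stab$ while remaining underivable in $\rg$ (Duncan and Perdrix). The congruence-closure reduction and the check for $\rgtorgb$ are routine once functoriality of the two maps is taken as given, leaving the modest matrix computation as the substantive step.
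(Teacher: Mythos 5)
Your proposal is correct and is in substance the same argument the paper compresses into its one-line ``routine check'': one reduces to verifying the single generating Euler relation under each functor, where the $\rgb$ side holds essentially by definition of $\rgtorgb$ (the paper's translation table indeed omits $\hd$, whose image must be the translated composite $\rot[$1$]{gn}\,\rot[$1$]{rn}\,\rot[$1$]{gn}$, as you note), and the $\stab$ side is your matrix computation giving $e^{i\pi/4}H$. Your one delicate point is exactly right and worth the emphasis: the equality holds in $\stab$ only because it sits inside $\fdhilb_{wp}$, where the global phase $e^{i\pi/4}$ is quotiented away.
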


\begin{proof}
  This is proved by a routine check on all the rules of $\rg$.
\end{proof}

With the previous lemma, we can now prove the following proposition

\begin{prop}
We can lift  $\mathcal{T}$ and $\rgtostab{\cdot}$ uniquely to the functors $\mathcal{\hat T}:\rgp\to\rgb$ and $\rgptostab{\cdot}$ such that the following diagram commutes:
  \begin{equation}
      \xymatrix{
  \rg
\ar@{>>}[rr]^{\mathcal{Q}}
\ar[dr]_{\mathcal{T}}
\ar@/_1.5pc/[dddr]_{\rgtostab{\cdot}}
&&\rgp
\ar[dl]^{\mathcal{\hat T}}
\ar@/^1.5pc/[dddl]^{\rgptostab{\cdot}}\\
&\rgb
\ar[dd]^{\rgbtostab{\cdot}}\\\\
&\stab
}
  \end{equation}
\end{prop}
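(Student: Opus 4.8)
The plan is to obtain both liftings from the universal property of the quotient category, and then to close the lower triangle by a short diagram chase. First I would recall what $\rgp=\rg/E$ means: it is the quotient of $\rg$ by the smallest congruence $\sim_E$ containing the relation $E$, i.e.\ the least family of equivalence relations on the hom-sets that contains $E$ and is compatible with composition and with the monoidal product. The quotient functor $\mathcal{Q}:\rg\to\rgp$ is the identity on objects and sends each morphism to its $\sim_E$-class, and it satisfies the universal property that any functor $F:\rg\to\mathcal{C}$ identifying $\sim_E$-related morphisms factors as $F=\hat F\circ\mathcal{Q}$ for a \emph{unique} functor $\hat F:\rgp\to\mathcal{C}$.

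Next I would check the hypothesis of this universal property for $\rgtorgb$ and for $\rgtostab{\cdot}$. Lemma~\ref{lem:rgp} gives precisely $\rgtorgb{f}=\rgtorgb{g}$ and $\rgtostab{f}=\rgtostab{g}$ for every generating pair $f\stackrel{E}{\equiv}g$. Since any functor preserves composition and the monoidal product, the collection of pairs that a given functor identifies is itself a congruence; containing the generators $E$, it must contain the whole generated congruence $\sim_E$. Thus both functors respect $\sim_E$, and applying the universal property twice yields unique functors $\mathcal{\hat T}:\rgp\to\rgb$ and $\rgptostab{\cdot}:\rgp\to\stab$ with $\mathcal{\hat T}\circ\mathcal{Q}=\rgtorgb$ and $\rgptostab{\cdot}\circ\mathcal{Q}=\rgtostab{\cdot}$; these two identities are exactly the upper triangle and the left-hand arc of the displayed diagram.

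Finally I would establish the remaining triangle $\rgbtostab{\cdot}\circ\mathcal{\hat T}=\rgptostab{\cdot}$. Both sides are functors $\rgp\to\stab$, so it is enough to see that they become equal after precomposition with $\mathcal{Q}$:
\[
(\rgbtostab{\cdot}\circ\mathcal{\hat T})\circ\mathcal{Q}=\rgbtostab{\cdot}\circ\rgtorgb=\rgtostab{\cdot}=\rgptostab{\cdot}\circ\mathcal{Q},
\]
where the middle equality is the already-proved commutativity of the $\rg$--$\rgb$--$\stab$ triangle. Because $\mathcal{Q}$ is the identity on objects and full, it can be cancelled on the right---equivalently, both composites are liftings of $\rgtostab{\cdot}$ through $\mathcal{Q}$ and so agree by the uniqueness clause above---giving $\rgbtostab{\cdot}\circ\mathcal{\hat T}=\rgptostab{\cdot}$.

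The step I expect to be the only real subtlety is the passage from the relation $E$ to the congruence $\sim_E$ it generates: Lemma~\ref{lem:rgp} is stated for single generating pairs, whereas $\rgp$ is a quotient by the generated congruence, so one must argue---as above---that a functor respecting the generators automatically respects $\sim_E$. Once that is granted, everything is a formal consequence of the universal property together with the earlier commuting triangle, with no computation required.
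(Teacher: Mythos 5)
Your proposal is correct and follows essentially the same route as the paper: the liftings come from the universal property of the quotient category $\rgp$ together with Lemma~\ref{lem:rgp}, and the lower triangle is closed by cancelling $\mathcal{Q}$ on the right, which is exactly the paper's appeal to $\mathcal{Q}$ being an epi. Your extra care in passing from the generating pairs of $E$ to the generated congruence is a worthwhile elaboration of a step the paper leaves implicit, but it is not a different argument.
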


\begin{proof}
  The lifting of $\mathcal{T}$ and $\rgtostab{\cdot}$ is immediate from the fact that $\rgp$ is a quotient category and Lemma~\ref{lem:rgp}~\cite{mac1998categories}.

  The lower right 2-cell is proved by using the fact that $\mathcal{Q}$ is an epi.
\end{proof}

We can define a functor $\mathcal{S}:\rgb\to\rgp$ to act the following way on generators:
\[\def\r#1{\scalebox{0.7}{$#1$}}
\begin{array}{@{}c@{~\quad}c@{~\quad}c@{~\quad}c@{~\quad}c@{}}
   \rgbtorgpof{\counit{gn}}=~\counit{go}
&  \rgbtorgpof{\rot{gn}}=~\rot{go}
&
  \rgbtorgpof{\unit{gn}}=~\unit{go}
&
  \rgbtorgpof{\comul{gn}}=\comul{go}
&
  \rgbtorgpof{\mul{gn}}=\mul{go}
\\
  \rgbtorgpof{\counit{rn}}=~\counit[$1$]{ro}
&
  \rgbtorgpof{\rot{rn}}=~\rot{ro}
&
   \rgbtorgpof{\unit{rn}}=~\unit[$3$]{ro}
&
  \rgbtorgpof{\comul{rn}}=\comul[$3$]{ro}
&
  \rgbtorgpof{\mul{rn}}=\mul[$1$]{ro}
\\\\[-2ex]
  \rgbtorgpof{\counit{bn}}=~\counit{ro}
&
 \rgbtorgpof{\rot{bn}}=~
  \begin{tikzpicture}
	\begin{pgfonlayer}{nodelayer}
		\node [style=none] (0) at (0, 1.8) {};
		\node [style=go] (1) at (0, 1) {$3$};
		\node [style=ro] (2) at (0, -0) {$\theta$};
		\node [style=go] (3) at (0, -1) {$1$};
		\node [style=none] (4) at (0, -1.8) {};
	\end{pgfonlayer}
	\begin{pgfonlayer}{edgelayer}
		\draw (0.center) to (1);
		\draw (2) to (3);
		\draw (3) to (4.center);
		\draw (1) to (2);
	\end{pgfonlayer}
  \end{tikzpicture}
&
  \rgbtorgpof{\unit{bn}}=~\unit{ro}
&
  \rgbtorgpof{\comul{bn}}=\hspace{-2mm}
\r{\begin{tikzpicture}
	\begin{pgfonlayer}{nodelayer}
		\node [style=none] (0) at (0, 1.8) {};
		\node [style=go] (1) at (0, 0.8) {$3$};
		\node [style=ro] (2) at (0, -0.2) {};
		\node [style=go] (3) at (-0.8, -0.8) {$1$};
		\node [style=go] (4) at (0.8, -0.8) {$1$};
		\node [style=none] (5) at (-0.8, -1.8) {};
		\node [style=none] (6) at (0.8, -1.8) {};
	\end{pgfonlayer}
	\begin{pgfonlayer}{edgelayer}
		\draw (0.center) to (1);
		\draw (4) to (6.center);
		\draw (2) to (3);
		\draw (2) to (4);
		\draw (3) to (5.center);
		\draw (1) to (2);
	\end{pgfonlayer}
\end{tikzpicture}}
&
    \rgbtorgpof{\mul{bn}}=\hspace{-2mm}
\r{\begin{tikzpicture}
	\begin{pgfonlayer}{nodelayer}
		\node [style=none] (0) at (-0.8, 1.8) {};
		\node [style=none] (1) at (0.8, 1.8) {};
		\node [style=go] (2) at (-0.8, 0.8) {$3$};
		\node [style=go] (3) at (0.8, 0.8) {$3$};
		\node [style=ro] (4) at (0, 0.2) {};
		\node [style=go] (5) at (0, -0.8) {$1$};
		\node [style=none] (6) at (0, -1.8) {};
	\end{pgfonlayer}
	\begin{pgfonlayer}{edgelayer}
		\draw (0.center) to (2);
		\draw (1.center) to (3);
		\draw (4) to (5);
		\draw (3) to (4);
		\draw (5) to (6.center);
		\draw (2) to (4);
	\end{pgfonlayer}
\end{tikzpicture}}
\end{array}
\]
\begin{thm}
  $\rgbtorgp$ is the inverse functor of $\hat \rgtorgb$. In other words, $\rgb$ and $\rgp$ are isomorphic categories.
\end{thm}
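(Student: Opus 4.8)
The plan is to verify directly that $\hat{\rgtorgb}$ and $\rgbtorgp$ are mutually inverse, using that both are the identity on objects (each preserves the generating object $*$), so that only the actions on morphisms require attention. Since $\rgb$ and $\rgp$ are presented by generators and relations and the functors preserve composition and the monoidal product, everything reduces to three checks: (A) that $\rgbtorgp$ is a well-defined functor, i.e.\ sends each defining relation of $\rgb$ to an equation valid in $\rgp$; (B) that $\hat{\rgtorgb}\circ\rgbtorgp$ agrees with $\id_{\rgb}$ on the generators of $\rgb$; and (C) that $\rgbtorgp\circ\hat{\rgtorgb}$ agrees with $\id_{\rgp}$.

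First I would treat (A), which I expect to be the main obstacle. On the green generators $\rgbtorgp$ is the identity-on-colour assignment and each green relation of $\rgb$ is literally a green relation of $\rg$, so nothing is to prove. On the red generators one only tracks the phase labels ($1$s and $3$s) attached by the table and checks that they cancel modulo $4$. The real content lies in the blue generators and the heterochromatic relations---the blue Frobenius/spider laws, the bialgebra laws, the colour-changer definitions and the dualizer identities---because $\rgbtorgpof{\rot{bn}}$, $\rgbtorgpof{\mul{bn}}$ and $\rgbtorgpof{\comul{bn}}$ are the green--red--green Euler towers listed in the table. Checking such a relation therefore becomes a rewrite of red/green diagrams that is valid only once the Euler relation $E$ defining $\rgp=\rg/E$ is available; this is precisely where the slogan ``dichromatic + Euler = trichromatic'' is discharged, and it is the step carrying all the mathematical weight.

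Granting (A), the remaining two checks are mechanical. For (B), two functors that agree on a generating set agree everywhere, so I would evaluate $\hat{\rgtorgb}\circ\rgbtorgp$ on each generator of $\rgb$: on the green and red generators the phase bookkeeping cancels and the generator is returned, while for each blue generator $\rgbtorgp$ produces an Euler red/green expression that $\hat{\rgtorgb}$ carries to the corresponding red/green $\rgb$-diagram, which equals the blue generator by the colour-changer rule and the ``node in terms of the other two colours'' rule of $\rgb$. For (C), rather than compute inside the quotient I would use that $\mathcal{Q}\colon\rg\to\rgp$ is epi (as already exploited when lifting $\rgtorgb$ to $\hat{\rgtorgb}$): since $\hat{\rgtorgb}\circ\mathcal{Q}=\rgtorgb$, it suffices to prove $\rgbtorgp\circ\rgtorgb=\mathcal{Q}$, which is checked on the generators of $\rg$---green returns to green, and the red generator acquires its $\rgtorgb$-phase and then its $\rgbtorgp$-phase, whose labels sum to $0\bmod 4$, returning the original $\rg$ generator in $\rgp$. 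Thus the whole theorem rests on the relation-checking of step (A).
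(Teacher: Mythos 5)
Your proposal is correct and takes essentially the same route as the paper, whose proof is exactly your checks (B) and (C) on generators---prove $\hat{\rgtorgb}(\rgbtorgpof{\varphi})=\varphi$ in $\rgb$ for each generator $\varphi$ of $\rgb$, and $\rgbtorgpof{\hat{\rgtorgb}(\psi)}=\psi$ in $\rgp$ for each generator $\psi$---with your well-definedness step (A) folded implicitly into the paper's preceding definition of $\rgbtorgp$ (you are right that this is where the Euler relation $E$ carries the real weight). Your rerouting of (C) through the epi $\mathcal{Q}$ is an equivalent repackaging of the paper's direct check in the quotient, since the generators of $\rgp$ are the $\mathcal{Q}$-images of those of $\rg$ and provability in $\rgp$ means provability from the $\rg$ rules together with $E$.
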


\begin{proof}
  This requires showing that for each generator $\varphi$ of $\rgb$, one can prove $\hat \rgtorgb(\rgbtorgp{\varphi}) = \varphi$ from the rules of $\rgb$ and that for each generator $\psi$ of $\rgp$, one can prove $\rgbtorgpof{\hat\rgtorgb{\psi}}=\psi$ from the rules of $\rgp$.
\end{proof}

In a sense, the extra equations that appear in $\rgb$ and are not in $\rg$ are the equations about the rotations of the octahedral subgroup of the rotation group of the Bloch sphere. This isomorphism of categories tells us that these rotation equations are necessary and sufficient to prove Van den Nest's theorem.

\section{Extensions}
In the description of the dichromatic calculus given in~\cite{coecke2008interacting,coecke2011interacting,duncan2009graph,duncan2010rewriting}, phases were permitted to be arbitrary real numbers representing arbitrary angles---not just integer multiples of $\frac{\pi}{2}$. We offer a method of extending the present work to make that possible.

In an $n$-chromatic diagram, the group of phases for the combined $n$ colours can be seen as a quotient of $C_4^{*n}$, the free product of $n$ copies of $C_4$. $C_4^{*n}$ thus maps into $\cat D$ in a natural way. If $g: C_4^{*n}\to G$ is a group homomorphism, then we can construct the following pushout:
\begin{equation}
  \xymatrix{
C_4^{*n}
\ar[r]^{g}
\ar[d]_I
\po&
G
\ar[d]\\
\cat{D}
\ar[r]&
\cat{D}(G)
}
\end{equation}

Note that although the homomorphism $g$ is not present in the notation of $\cat D(G)$, the pushout does depend on it. We omit it so as to avoid clutter.

In the case where $\cat D = \rgb$, we can internalize some of the relations in $\rgb$ by using the quotient map we had defined earlier in the proof of Proposition ~\ref{prop:octa}: $q: C_4*C_4*C_4 \to O$. The pushout square then becomes:
\begin{equation}
  \xymatrix{
C_4*C_4*C_4
\ar[r]^{q}
\ar[d]_I
\po&
O
\ar[d]\\
\rgb
\ar[r]_{\id}&
\rgb
}
\end{equation}

We suppose that $H$ is an abelian group and $h:C_4\to H$ is a group homomorphism. Of particular interest to us will be the case where $h$ is monic, in which case, $H$ will be seen as an extension of the phase group. Then we can lift $h$ to a group homomorphism $h^{*n}$ by the unique arrow which makes the following diagram commute:
\begin{equation}
\xymatrix{
&&&C_4^{*n}\ar@{-->}[ddd]^{h^{*n}}\\
C_4\ar[urrr]^{i_0}
\ar[d]_h
&
\cdots&
C_4
\ar[ur]^{i_n}
\ar[d]_h\\
H\ar[drrr]_{j_0}
&
\cdots&
H
\ar[dr]_{j_n}\\
&&&
H^{*n}
}
\end{equation}

$h^{*n}$ is guaranteed to exist and be unique by the universal property of the free product in $\cat{Grp}$.

If $D$ is a chromatic diagram category with $n$ colours, then there is a functor $I:C_4^{*n}\to D$---where $C_4^{*n}$ is seen as a category with one object---which is defined by the property that $I\circ i_k$ maps $C_4$ to the phase group of the $k$th colour. From this data, we can construct the following pushout.
\begin{equation}
  \xymatrix{
C_4^{*n}
\ar[r]^{h^{*n}}
\ar[d]_I
\po&
H^{*n}
\ar[d]\\
\cat{D}
\ar[r]&
\cat{D}(H^{*n})
}
\end{equation}

In the particular case that $H=U(1)$---the group of rotations of the circle---with $h:k\mapsto \frac{\pi}{2}k$, we use the shorthand $\dcirc:=\cat{D}(U(1))$. The diagrams of $\rg_{\bigcirc}$ were the ones described in ~\cite{coecke2011interacting}.

In $\rgb$, we end up with the following cube:
\begin{equation}
\xymatrix{
  C_4^{*3}
\ar[rr]^q
\ar[dr]
\ar[dd]_{h*h*h}
&&
O
\ar[dr]
\ar[dd]
\\
&
\rgb
\ar[rr]_{\id}
\ar[dd]
&&
\rgb
\ar[dd]\\
U(1)^{*3}
\ar[rr]
\ar[dr]
&&
S
\ar[dr]\\
&\rgb_\bigcirc
\ar[rr]_{\id}
&&
\rgb_\bigcirc
}
\end{equation}

where all the faces are pushouts and $S$ is a group extension of $SO(3)$.

There is an obvious way to additionally define a functor $\tostab{\dcirc}{\cdot}:\dcirc\to\fdhilb_Q$ such that the following diagram commutes.
\begin{equation}
\xymatrix{
\cat D
\ar[r]
\ar[d]_{\tostab{\cat D}{\cdot}}
&
\dcirc
\ar[d]^{\tostab{\dcirc}{\cdot}}
\\
\cat{Stab}
\ar[r]
&
\fdhilb_Q
}
\end{equation}

\section{Future Work}
As mentioned earlier, it is known that $\rg$ is not complete for $\stab$---that is to say, $\rgtostab{\cdot}$ is not a faithful functor. It is not known, however if $\rgbtostab{\cdot}$ is faithful or not. In the future, we would hope to either prove that it is (which would be a great result), or show that it's not by providing a counterexample.

A lot of the work we have done was duplicated---once for $\rg$ and again for $\rgb$. It would be a good idea to abstract away the common features of $\rg$ and $\rgb$ to yield a theory of ``coloured graphical calculi''. We could then see if this can be applied to other candidates such as the GHZ/W calculus~\cite{coecke2010compositional}.

Spekkens has developed a toy theory of qubits~\cite{PhysRevA.75.032110}, which was later formalized categorically in ~\cite{coecke2011toy} as the category $\spek$. $\spek$ exposes some features of quantum mechanics, but not all.  For example, it does not ``have'' non-locality~\cite{coecke2011phase}. Interpreting the diagrams of $\rgb$ into $\spek$ would hopefully reveal more of the differences between $\spek$ and $\stab$.

The concept of environment structures was defined in~\cite{coecke2010environment}, to formalize some of the features of classicality, measurements and complementarity. However, these ideas were defined in the context of $\dag$-compact categories. It would be interesting to see how this definition can be adapted to $\rgb$, which is only a \dsmc{}.

On the more practical side, the trichromatic calculus can hopefully be used to derive protocols. Some protocols---such as quantum secret sharing---explicitly use three complementary observables. Hopefully, we will be able to use Quantomatic to help with this task as well.

\bibliography{rgb}

\begin{thebibliography}{10}
\providecommand{\bibitemdeclare}[2]{}
\providecommand{\urlprefix}{Available at }
\providecommand{\url}[1]{\texttt{#1}}
\providecommand{\href}[2]{\texttt{#2}}
\providecommand{\urlalt}[2]{\href{#1}{#2}}
\providecommand{\doi}[1]{doi:\urlalt{http://dx.doi.org/#1}{#1}}
\providecommand{\bibinfo}[2]{#2}
\providecommand{\arxiv}[1]{arXiv:\urlalt{http://arxiv.org/abs/#1}{#1}}

\bibitemdeclare{inproceedings}{abramsky2004categorical}
\bibitem{abramsky2004categorical}
\bibinfo{author}{S.~Abramsky} \& \bibinfo{author}{B.~Coecke}
  (\bibinfo{year}{2004}): \emph{\bibinfo{title}{A categorical semantics of
  quantum protocols}}.
\newblock In: {\sl \bibinfo{booktitle}{Logic in Computer Science, 2004.
  Proceedings of the 19th Annual IEEE Symposium on}},
  \bibinfo{organization}{IEEE}, pp. \bibinfo{pages}{415--425},
  \doi{10.1109/LICS.2004.1319636}.

\bibitemdeclare{article}{coecke2008interacting}
\bibitem{coecke2008interacting}
\bibinfo{author}{B.~Coecke} \& \bibinfo{author}{R.~Duncan}
  (\bibinfo{year}{2008}): \emph{\bibinfo{title}{Interacting quantum
  observables}}.
\newblock {\sl \bibinfo{journal}{ICALP. Automata, Languages and Programming}},
  pp. \bibinfo{pages}{298--310}, \doi{10.1007/978-3-540-70583-3\_25}.

\bibitemdeclare{article}{coecke2011interacting}
\bibitem{coecke2011interacting}
\bibinfo{author}{B.~Coecke} \& \bibinfo{author}{R.~Duncan}
  (\bibinfo{year}{2011}): \emph{\bibinfo{title}{Interacting quantum
  observables: Categorical algebra and diagrammatics}}.
\newblock {\sl \bibinfo{journal}{New Journal of Physics}} \bibinfo{volume}{13},
p. \bibinfo{pages}{043016}, \doi{10.1088/1367-2630/13/4/043016}.

\bibitemdeclare{article}{coecke2011three}
\bibitem{coecke2011three}
\bibinfo{author}{B.~Coecke} \& \bibinfo{author}{B.~Edwards}
  (\bibinfo{year}{2011}): \emph{\bibinfo{title}{Three qubit entanglement within
  graphical Z/X-calculus}}.
\newblock \arxiv{1103.2811}.

\bibitemdeclare{article}{coecke2011toy}
\bibitem{coecke2011toy}
\bibinfo{author}{B.~Coecke} \& \bibinfo{author}{B.~Edwards}
  (\bibinfo{year}{2011}): \emph{\bibinfo{title}{Toy quantum categories}}.
\newblock {\sl \bibinfo{journal}{Electronic Notes in Theoretical Computer
  Science}} \bibinfo{volume}{270}(\bibinfo{number}{1}), pp.
  \bibinfo{pages}{29--40}, \doi{10.1016/j.entcs.2011.01.004}.

\bibitemdeclare{article}{coecke2011phase}
\bibitem{coecke2011phase}
\bibinfo{author}{B.~Coecke}, \bibinfo{author}{B.~Edwards} \&
  \bibinfo{author}{R.W. Spekkens} (\bibinfo{year}{2011}):
  \emph{\bibinfo{title}{Phase groups and the origin of non-locality for
  qubits}}.
\newblock {\sl \bibinfo{journal}{Electronic Notes in Theoretical Computer
  Science}} \bibinfo{volume}{270}(\bibinfo{number}{2}), pp.
  \bibinfo{pages}{15--36}, \doi{10.1016/j.entcs.2011.01.021}.

\bibitemdeclare{article}{coecke2010compositional}
\bibitem{coecke2010compositional}
\bibinfo{author}{B.~Coecke} \& \bibinfo{author}{A.~Kissinger}
  (\bibinfo{year}{2010}): \emph{\bibinfo{title}{The compositional structure of
  multipartite quantum entanglement}}.
\newblock {\sl \bibinfo{journal}{ICALP. Automata, Languages and Programming}},
  pp. \bibinfo{pages}{297--308}, \doi{10.1007/978-3-642-14162-1\_25}.

\bibitemdeclare{inproceedings}{coecke2010environment}
\bibitem{coecke2010environment}
\bibinfo{author}{B.~Coecke} \& \bibinfo{author}{S.~Perdrix}
  (\bibinfo{year}{2010}): \emph{\bibinfo{title}{Environment and classical
  channels in categorical quantum mechanics}}.
\newblock In: {\sl \bibinfo{booktitle}{Computer Science Logic}},
  \bibinfo{organization}{Springer}, pp. \bibinfo{pages}{230--244},
  \doi{10.1007/978-3-642-15205-4\_20}.

\bibitemdeclare{article}{coecke2008bases}
\bibitem{coecke2008bases}
\bibinfo{author}{B.~Coecke}, \bibinfo{author}{S.~Perdrix} \&
  \bibinfo{author}{{\'E}.O. Paquette} (\bibinfo{year}{2008}):
  \emph{\bibinfo{title}{Bases in diagrammatic quantum protocols}}.
\newblock {\sl \bibinfo{journal}{Electronic Notes in Theoretical Computer
  Science}} \bibinfo{volume}{218}, pp. \bibinfo{pages}{131--152},
  \doi{10.1016/j.entcs.2008.10.009}.

\bibitemdeclare{article}{DBLP:journals/corr/abs-1011-4114}
\bibitem{DBLP:journals/corr/abs-1011-4114}
\bibinfo{author}{Lucas Dixon} \& \bibinfo{author}{Aleks Kissinger}
  (\bibinfo{year}{2010}): \emph{\bibinfo{title}{Open Graphs and Monoidal
  Theories}}.
\arxiv{1011.4114}.

\bibitemdeclare{article}{duncan2009graph}
\bibitem{duncan2009graph}
\bibinfo{author}{R.~Duncan} \& \bibinfo{author}{S.~Perdrix}
  (\bibinfo{year}{2009}): \emph{\bibinfo{title}{Graph states and the necessity
  of {E}uler decomposition}}.
\newblock {\sl \bibinfo{journal}{Mathematical Theory and Computational
  Practice}}, pp.
  \bibinfo{pages}{167--177}, \doi{10.1007/978-3-642-03073-4\_18}.

\bibitemdeclare{article}{duncan2010rewriting}
\bibitem{duncan2010rewriting}
\bibinfo{author}{R.~Duncan} \& \bibinfo{author}{S.~Perdrix}
  (\bibinfo{year}{2010}): \emph{\bibinfo{title}{Rewriting measurement-based
  quantum computations with generalised flow}}.
\newblock {\sl \bibinfo{journal}{ICALP. Automata, Languages and Programming}},
  pp. \bibinfo{pages}{285--296}, \doi{10.1007/978-3-642-14162-1\_24}.

\bibitemdeclare{unpublished}{hillebrand2011quantum}
\bibitem{hillebrand2011quantum}
\bibinfo{author}{A.~Hillebrand} (\bibinfo{year}{2011}):
  \emph{\bibinfo{title}{Quantum Protocols involving Multiparticle Entanglement
  and their Representations in the zx-calculus}}.
\newblock \bibinfo{note}{University of Oxford MSc thesis}.

\bibitemdeclare{article}{1367-2630-13-9-095011}
\bibitem{1367-2630-13-9-095011}
\bibinfo{author}{Clare Horsman} (\bibinfo{year}{2011}):
  \emph{\bibinfo{title}{Quantum picturalism for topological cluster-state
  computing}}.
\newblock {\sl \bibinfo{journal}{New Journal of Physics}}
  \bibinfo{volume}{13}(\bibinfo{number}{9}), p. \bibinfo{pages}{095011},
  \doi{10.1088/1367-2630/13/9/095011}.

\bibitemdeclare{book}{kassel1995quantum}
\bibitem{kassel1995quantum}
\bibinfo{author}{C.~Kassel} (\bibinfo{year}{1995}):
  \emph{\bibinfo{title}{Quantum groups}}.
\newblock Graduate Texts in Mathematics \bibinfo{volume}{155}, \bibinfo{publisher}{Springer},
  \doi{10.1007/978-1-4612-0783-2}.

\bibitemdeclare{book}{kock2004frobenius}
\bibitem{kock2004frobenius}
\bibinfo{author}{Joachim Kock} (\bibinfo{year}{2003}):
  \emph{\bibinfo{title}{Frobenius Algebras and 2-D Topological Quantum Field
  Theories}}.
\newblock \bibinfo{publisher}{Cambridge University Press},
  \doi{10.1017/CBO9780511615443}.

\bibitemdeclare{book}{mac1998categories}
\bibitem{mac1998categories}
\bibinfo{author}{S.~Mac~Lane} (\bibinfo{year}{1998}):
  \emph{\bibinfo{title}{Categories for the working mathematician}}.
\newblock Graduate Texts in Mathematics 5, \bibinfo{publisher}{Springer}.

\bibitemdeclare{article}{van2004graphical}
\bibitem{van2004graphical}
\bibinfo{author}{M.~Van~den Nest}, \bibinfo{author}{J.~Dehaene} \&
  \bibinfo{author}{B.~De~Moor} (\bibinfo{year}{2004}):
  \emph{\bibinfo{title}{Graphical description of the action of local Clifford
  transformations on graph states}}.
\newblock {\sl \bibinfo{journal}{Physical Review A}}
  \bibinfo{volume}{69}(\bibinfo{number}{2}), p. \bibinfo{pages}{9422},
  \doi{10.1103/PhysRevA.69.022316}.

\bibitemdeclare{article}{nielsen2002quantum}
\bibitem{nielsen2002quantum}
\bibinfo{author}{M.A. Nielsen} and \bibinfo{author}{I.~Chuang} (\bibinfo{year}{2000}):
  \emph{\bibinfo{title}{Quantum computation and quantum information}}.
\newblock Cambridge U.~Press.

\bibitemdeclare{article}{selinger2007dagger}
\bibitem{selinger2007dagger}
\bibinfo{author}{P.~Selinger} (\bibinfo{year}{2007}):
  \emph{\bibinfo{title}{Dagger compact closed categories and completely
  positive maps}}.
\newblock {\sl \bibinfo{journal}{Electronic Notes in Theoretical Computer
  Science}} \bibinfo{volume}{170}, pp. \bibinfo{pages}{139--163},
  \doi{10.1016/j.entcs.2006.12.018}.

\bibitemdeclare{article}{PhysRevA.75.032110}
\bibitem{PhysRevA.75.032110}
\bibinfo{author}{Robert~W. Spekkens} (\bibinfo{year}{2007}):
  \emph{\bibinfo{title}{Evidence for the epistemic view of quantum states: A
  toy theory}}.
\newblock {\sl \bibinfo{journal}{Phys. Rev. A}} \bibinfo{volume}{75}, p.
  \bibinfo{pages}{032110}, \doi{10.1103/PhysRevA.75.032110}.

\bibitemdeclare{article}{street:3930}
\bibitem{street:3930}
\bibinfo{author}{Ross Street} (\bibinfo{year}{2004}):
  \emph{\bibinfo{title}{Frobenius monads and pseudomonoids}}.
\newblock {\sl \bibinfo{journal}{Journal of Mathematical Physics}}
  \bibinfo{volume}{45}(\bibinfo{number}{10}), pp. \bibinfo{pages}{3930--3948},
  \doi{10.1063/1.1788852}.

\bibitemdeclare{article}{DBLP:journals/qic/WocjanB05}
\bibitem{DBLP:journals/qic/WocjanB05}
\bibinfo{author}{Pawel Wocjan} \& \bibinfo{author}{Thomas Beth}
  (\bibinfo{year}{2005}): \emph{\bibinfo{title}{New construction of mutually
  unbiased bases in square dimensions}}.
\newblock {\sl \bibinfo{journal}{Quantum Information {\&} Computation}}
  \bibinfo{volume}{5}(\bibinfo{number}{2}), pp. \bibinfo{pages}{93--101}.
\newblock \urlprefix\url{http://portal.acm.org/citation.cfm?id=2011627}.

\end{thebibliography}
\bibliographystyle{eptcs}


\end{document}